\newcommand{\N}{\mathbb{N}}                     
\newcommand{\C}{\mathbb{C}}                     
\newcommand{\T}{\mathbb{T}}                     
\newcommand{\set}[2]{\left\{{#1}\mid{#2}\right\}}       
\newcommand{\dist}{\mathrm{dist\,}}             
\newcommand{\dom}{\mathrm{dom}\,}		
\newcommand{\Span}[1]{\mathrm{Span}\left(#1\right)}	 
\newcommand{\ca}[1]{\mathscr{#1}}			
\newtheorem{mainthm}{\sc Theorem}           
\newtheorem{thm}{\sc Theorem}[section]      
\newtheorem{cor}[thm]{\sc Corollary}        
\newtheorem{lem}[thm]{\sc Lemma}            
\newtheorem{rem}[thm]{\sc Remark}	    
\author{Marco Abate, Alberto Abbondandolo, Pietro Majer}
\title{Stable manifolds for holomorphic automorphisms}
\date{December 22, 2011}
\begin{document}

\maketitle 

\renewcommand{\theenumi}{\roman{enumi}}
\renewcommand{\labelenumi}{(\theenumi)}

\begin{abstract}
We give a sufficient condition for the abstract basin of attraction of a sequence of holomorphic
self-maps of balls in $\mathbb{C}^d$ to be biholomorphic to~$\mathbb{C}^d$. As a 
consequence, we get a sufficient condition for the
stable manifold of a point in a compact hyperbolic invariant
subset of a complex manifold to be biholomorphic to a complex Euclidean space. Our result
immediately implies previous theorems obtained
by Jonsson-Varolin and by Peters; in particular, 
we prove (without using Oseledec's theory) that the stable manifold of any point where the negative
Lyapunov exponents are well-defined is biholomorphic to a complex Euclidean space. Our approach is based on the solution of a linear control problem 
in spaces of subexponential sequences, and on careful estimates of the norm of the conjugacy operator by a lower triangular matrix on the space of $k$-homogeneous polynomial endomorphisms of~$\C^d$.  
\smallskip

\noindent\emph{Mathematics Subject Classification 2010. Primary:} 37F99. \emph{Secondary:} 32H50, 37D25, 37H99.
\end{abstract}

\section*{Introduction}

Let $f\colon M\rightarrow M$ be a holomorphic automorphism of a complex manifold and let $\Lambda\subset M$ be a compact hyperbolic invariant subset of $M$, with stable distribution of complex dimension $d$. The stable manifold of each point $x\in \Lambda$, that is the set
\[
W^s(x) = \set{z\in M}{\lim_{n\rightarrow \infty} \dist \bigl(f^n(z),f^n(x)\bigr) = 0},
\]
is the image of a holomorphic injective immersion $W \hookrightarrow M$
of a complex manifold $W$. Such an immersion endowes $W^s(x)$ with the structure of a complex manifold. As a model $W$ one can choose, for instance, the space of sequences
\[
W = \set{(z_n)\subset M}{z_{n+1} = f(z_n) \mbox{ and } \lim_{n\rightarrow \infty} \dist(z_n,f^n(x)) = 0},
\]
which is a $d$-dimensional complex submanifold of the complex Banach manifold consisting of all sequences $(z_n)\subset M$ such that $\dist(z_n,f^n(x))$ is infinitesimal. In this case, the immersion maps each sequence $(z_n)\in W$ into its first element $z_0$. 

The stable manifold $W^s(x)$ is smoothly diffeomorphic to $\C^d$ and it is natural to ask whether it is also biholomorphic to $\C^d$ (such a question was raised for instance by E.\ Bedford in \cite{bed00}). 

The answer turns out to be affirmative when the invariant set $\Lambda$ is a hyperbolic fixed point, as proven by J.-P. Rosay and W.\ Rudin in \cite{rr88}. More generally, M.\ Jonsson and D.\ Varolin proved that the answer is affirmative for almost every point in $\Lambda$, with respect to any invariant probability measure supported on $\Lambda$ (see \cite{jv02}). In the general case, J.\ E.\ Forn{\ae}ss and B.\ Stens{\o}nes have proven that $W^s(x)$ is always biholomorphic to a domain in $\C^d$ (see \cite{fs04}, Remark \ref{domain} in the appendix below also explains why this fact holds), but the question whether it is actually biholomorphic to $\C^d$ remains open.

By the local stable manifold theorem, a neighborhood of each point $f^n(x)$ in $W^s(f^n(x)) = f^n(W^s(x))$ is biholomorphic to the unit ball $B$ about $0$ in $\C^d$, and by reading the maps $f|_{W^s(f^n(x))}$ by means of these parameterizations (suitably chosen), one obtains a sequence of holomorphic maps
\[
f_n \colon  B \rightarrow B
\]
which fix $0$ and are such that for every $n\in \N$, 
\begin{equation}
\label{uniforme}
\nu |z| \leq |f_n(z)| \leq \lambda |z|\; ,\qquad \forall z\in B\;, 
\end{equation}
for some $0 < \nu \leq \lambda<1$. 
For any sequence of holomorphic maps $f=(f_n \colon  B \rightarrow B)_{n\in \N}$ with the above properties, one can define the {\em abstract basin of attraction of $0$} as the set $Wf$ of all sequences $(z_n)_{n\geq m}$ such that $z_{n+1}=f_n(z_n)$ for every $n\geq m$, under the identification of sequences which eventually coincide. Such an object carries a natural complex structure. This construction is due to J.\ E.\ Forn{\ae}ss and B.\ Stens{\o}nes \cite{fs04}; see Section \ref{saba} below for a more categorical approach.
When the maps $f_n$ are induced by a diffeomorphism $f$ as above, the manifold $Wf$ is naturally biholomorphic to the stable manifold of $x$. It has been conjectured that under the assumptions (\ref{uniforme}), the abstract basin of attraction of $0$ with respect to $(f_n)$ is biholomorphic to $\C^d$ (see \cite{pet05}, \cite{pw05}, \cite{pet07}). A positive answer to this conjecture would imply that the stable manifold of any orbit on a compact hyperbolic invariant set is biholomorphic to $\C^d$.

Notice that by the application of a suitable non-autonomous linear unitary conjugacy, that is by the replacement of $(f_n)$ by $(U_{n+1} \circ f_n \circ U_n^{-1})$ for a sequence $(U_n)$ of unitary automorphisms of $\C^d$, one may always assume that the linear parts of $f_n$,
\[
L_n := Df_n(0)\;,
\]     
are lower triangular matrices. More precisely, the choice of $U_0$ in the unitary group determines the subsequent matrices $U_n$, $n\geq 1$, up to a conjugacy with a unitary diagonal matrix, and uniquely determines the absolute values of the diagonal entries of $L_n$. The aim of this paper is to prove that, under a suitable assumption on the diagonal entries of $L_n$, the above conjecture holds true:

\begin{mainthm}
Let $f_n\colon  B \rightarrow B$ be a sequence of holomorphic maps which satisfies (\ref{uniforme}). Denote by $\lambda_n(j)$ the $j$-th diagonal entry of the lower triangular matrix $L_n=Df_n(0)$, for $1\leq j \leq d$, and assume that for every $\theta>1$ there exists a number $C(\theta)$ such that
\begin{equation}
\label{Ord}
\max_{1\leq h \leq d} \prod_{k=n}^{n+\ell-1} |\lambda_k (h)| \max_{1\leq i \leq j \leq d} \prod_{k=n}^{n+\ell-1} \frac{|\lambda_k(j)|}{|\lambda_k(i)|} \leq C(\theta) \theta^n \lambda^{\ell}\; ,
 \end{equation}
 for every $n,\ell\in \N$, for some $\lambda<1$. Then the abstract basin of attraction of $0$ with respect to $(f_n)$ is biholomorphic to $\C^d$.
 \end{mainthm}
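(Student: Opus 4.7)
The plan is to construct a biholomorphism $\C^d \to Wf$ by reducing the problem to a non-autonomous linearization. From the upper bound $|f_n(z)|\leq\lambda|z|$ in (\ref{uniforme}) one gets $\|L_n\|\leq\lambda$, so $\|L_0\cdots L_{n-1}\|\leq\lambda^n\to 0$, and consequently the abstract basin $W(L_\bullet)$ of the purely linear sequence $(L_n)$ is biholomorphic to $\C^d$ via the projection $(z_n)\mapsto z_0$. It therefore suffices to produce a sequence of holomorphic germs $\phi_n\colon(\C^d,0)\to(B,0)$, tangent to the identity at $0$, solving the conjugacy equation
\[
f_n\circ\phi_n = \phi_{n+1}\circ L_n,\qquad n\in\N,
\]
and converging on a ball $B(0,r)$ of some $r>0$ independent of $n$: the map $(z_n)\mapsto(\phi_n(z_n))$ will then identify $W(L_\bullet)$ with $Wf$.

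To solve for $(\phi_n)$, I expand $f_n = L_n+\sum_{k\geq 2}f_n^{(k)}$ and $\phi_n = I+\sum_{k\geq 2}\phi_n^{(k)}$, with $f_n^{(k)},\phi_n^{(k)}$ in the Banach space $H_k$ of $k$-homogeneous polynomial endomorphisms of $\C^d$. Matching terms of degree $k$ yields, for each $k\geq 2$, an inhomogeneous linear difference equation
\[
\phi_{n+1}^{(k)} = A_n\bigl(\phi_n^{(k)}\bigr) + G_n^{(k)},\qquad A_n(\phi):=L_n\circ\phi\circ L_n^{-1},
\]
on $H_k$, where $G_n^{(k)}$ is a polynomial expression in data of orders $<k$. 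Since $|\lambda_n(j)/\prod_s\lambda_n(i_s)|\geq\nu/\lambda^k>1$ for $k\geq 2$, the cocycle $(A_n)$ is uniformly expanding on $H_k$, so I would seek the solution as the backward Duhamel sum
\[
\phi_n^{(k)} \;=\; -\sum_{m=n}^{\infty} A_n^{-1}\circ\cdots\circ A_m^{-1}\bigl(G_m^{(k)}\bigr),
\]
provided its terms can be controlled in a space of subexponentially-growing sequences in $n$. This is the linear control problem alluded to in the abstract.

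The step I expect to be the main obstacle is estimating the operator norm of $A_n^{-1}\circ\cdots\circ A_m^{-1}$ on $H_k$. In a monomial basis of $H_k$ ordered appropriately, both $A_n$ and $A_n^{-1}$ are triangular, with diagonal entries $\lambda_n(j)/\prod_s\lambda_n(i_s)$ and $\prod_s\lambda_n(i_s)/\lambda_n(j)$ respectively; the first factor $\max_h\prod_k|\lambda_k(h)|$ in (\ref{Ord}) controls the diagonals of the relevant products directly. The delicate issue is that the $A_n^{-1}$ are not diagonal: inverting a product of triangular matrices produces off-diagonal entries built from off-diagonal terms of the factors multiplied by reciprocals of diagonal ones, and these scale as products of ratios $|\lambda_k(j)/\lambda_k(i)|$ with $i\leq j$ --- precisely what the second factor $\max_{i\leq j}\prod_k|\lambda_k(j)/\lambda_k(i)|$ in (\ref{Ord}) is designed to bound. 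Combining both factors of (\ref{Ord}) with an inductive bookkeeping of the triangular inverse should yield bounds on $\|\phi_n^{(k)}\|$ that are subexponential in $n$ and exponentially small in $k$, so that the Taylor series $\phi_n = I+\sum_k\phi_n^{(k)}$ converges on a ball $B(0,r)$ of some $r>0$ independent of $n$, providing the desired conjugation and completing the proof.
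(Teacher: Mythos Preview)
Your proposal has a genuine gap at the step where you claim the cocycle $(A_n)$ is uniformly expanding on $H_k$ for all $k\geq 2$. The bound you invoke, $|\lambda_n(j)/\prod_s\lambda_n(i_s)|\geq\nu/\lambda^k$, is correct, but the conclusion $\nu/\lambda^k>1$ requires $\nu>\lambda^k$; for $k=2$ this is precisely the condition $\lambda^2\mu<1$ (with $\mu=\nu^{-1}$), which is \emph{not} part of the hypotheses. In general one only has $\lambda^{m_0+1}\mu<1$ for some possibly large $m_0$, and for the low degrees $2\leq k\leq m_0$ the operator $A_n$ may have eigenvalues of modulus $\leq 1$ (resonances), so the backward Duhamel series diverges and the homological equation on $H_k$ cannot be solved with subexponential bounds. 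In other words, you are attempting to \emph{linearize} $(f_n)$, and non-autonomous linearization simply fails outside the special regime $\lambda^2\mu<1$ (cf.\ Remark~\ref{lin}).

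What the paper does instead is to conjugate $(f_n)$ not to $(L_n)$ but to a sequence $(g_n)$ of \emph{special triangular automorphisms} of degree $\leq m_0$: polynomial maps of the form $g_n=L_n+p_n$ with $p_n$ strictly lower-triangular. For $2\leq k\leq m_0$ the resonance obstruction is absorbed into the free terms $p_n^{(k)}\in\mathscr{T}^k$, and one only needs to solve the difference equation in the quotient $\mathscr{H}^k/\mathscr{T}^k$. The crucial estimate (Lemma~\ref{quoz}) bounds the norm of the conjugacy operator $\mathscr{A}_{L_{n+\ell,n}}$ on this \emph{quotient}, not on $\mathscr{H}^k$ itself, and condition (\ref{Ord}) is tailored exactly to make that quotient bound subexponentially small --- this is the correct interpretation of the two factors in (\ref{Ord}) that you were reaching for. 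One then shows separately (Lemma~\ref{triang}) that the basin of a bounded sequence of special triangular automorphisms is all of $\C^d$, which replaces your use of $W(L_\bullet)\cong\C^d$. Finally, the formal conjugacy produced this way is only subexponential, not bounded, so a rescaling trick and a convergence theorem (Appendix, Theorem~\ref{volata}) are needed to upgrade it to an honest bounded conjugacy of germs; your last paragraph underestimates this step as well.
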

 
Condition (\ref{Ord}) is a sort of asymptotic weak monotonicity requirement on the diagonal entries of $L_n$, and (as it will be made clear by the basic estimate of Lemma~\ref{quoz}) it is very natural in this context.  It is automatically fulfilled when the constants which appear in (\ref{uniforme}) satisfy $\nu^{-1}\lambda^2<1$ (up to the choice of a larger $\lambda<1$, see Remark \ref{lin} below), which is another condition often appearing in the literature.

The above theorem sharpens a result of H. Peters (see \cite[Theorem 9]{pet07}), where a stronger pointwise condition is considered: it is required that there exists $0<\lambda<1$ such that
\begin{equation}
\label{peters}
\max_{1\le h\le d}|\lambda_n(h)|\max_{1\le i\le j\le d}\frac{|\lambda_n(j)|}{|\lambda_n(i)|}\le\lambda
\end{equation}
for all $n$.  See also \cite{for04}, \cite{wol05}, \cite{pw05} and \cite{pvw08} for related results.

An advantage of the asymptotic condition (\ref{Ord}) as opposed to the pointwise condition (\ref{peters}) is that it better fits with ergodic theory. Indeed, 
Theorem 1 implies that the stable manifold $W^s(x)$ of a point in the compact hyperbolic invariant set $\Lambda$ is biholomorphic to $\C^d$ when the negative Lyapunov exponents of $f$ at $x$ are well-defined:
 
\begin{mainthm}
Let $f \colon  M \rightarrow M$ be a holomorphic automorphism of a complex manifold and let $\Lambda$ be a compact hyperbolic invariant set with $d$-dimensional stable bundle $E^s$. Let $x\in \Lambda$ and assume that the
stable space $E^s(x)$ at $x$ has a splitting
\[
E^s(x) = \bigoplus_{i=1}^r E_i 
\]
such that
\[
\lim_{n\rightarrow \infty} |Df^n(x)u|^{1/n} = \bar{\lambda}_i \quad \mbox{uniformly for $u$ in the unit sphere of } E_i \; ,
\]
where the numbers $0<\bar\lambda_i<1$ are pairwise distinct. Then the stable manifold of $x$ is biholomorphic to $\C^d$.
\end{mainthm}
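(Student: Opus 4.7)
\emph{Proof plan.} I would reduce Theorem~2 to Theorem~1: the stable manifold $W^s(x)$ is biholomorphic to the abstract basin of attraction $Wf$ associated to a sequence $f_n\colon B\to B$ obtained from the local stable manifold theorem, and it suffices to verify condition~(\ref{Ord}) after an adapted triangularization. Explicitly, the local stable manifold theorem along the orbit of $x$ produces biholomorphisms $\phi_n\colon B\to W^s_{\mathrm{loc}}(f^n(x))$ with $\phi_n(0)=f^n(x)$ such that $f_n=\phi_{n+1}^{-1}\circ f\circ\phi_n$ satisfies~(\ref{uniforme}); using the freedom to post-compose each $\phi_n$ with arbitrary unitaries on $\C^d$, I may specify any orthonormal frame $(e_j(n))_j$ at each $E^s(f^n(x))$, so that $L_n=Df_n(0)$ becomes the matrix of the cocycle $A_n:=Df(f^n(x))|_{E^s(f^n(x))}$ in these frames.

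\emph{Construction of the frames.} I order the exponents $\bar\lambda_1<\cdots<\bar\lambda_r$ (with $d_i=\dim E_i$) and fix an initial basis of $E^s(x)$ in which the first $d_r$ vectors span $E_r$, the next $d_{r-1}$ span $E_{r-1}$, and so on---so that the slowest-contracting block comes first. Pushing this initial basis forward by the cocycle yields an $A$-invariant decreasing flag $V_1(n)\supset\cdots\supset V_d(n)$ with $\dim V_j(n)=d-j+1$ satisfying $F_{i(j)-1}(n)\subset V_j(n)\subset F_{i(j)}(n)$, where $F_i(n):=Df^n(x)(E_1\oplus\cdots\oplus E_i)$ is the forward-Lyapunov filtration and the block-index function $i(j)\in\{1,\ldots,r\}$ is non-increasing. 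Orthogonalizing within each $V_j(n)\cap V_{j+1}(n)^\perp$ defines the frame $(e_j(n))_j$; by $A$-invariance of the flag, $L_n$ is then lower triangular with diagonal entries $\lambda_n(j)=\langle A_ne_j(n),e_j(n+1)\rangle$.

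\emph{Core estimate and conclusion.} The main step is to establish, for every $\varepsilon>0$, the two-sided estimate
\[
C_\varepsilon^{-1}\rho_\varepsilon^{-m}(\bar\lambda_{i(j)}-\varepsilon)^\ell\le\prod_{k=m}^{m+\ell-1}|\lambda_k(j)|\le C_\varepsilon\rho_\varepsilon^{m}(\bar\lambda_{i(j)}+\varepsilon)^\ell
\]
for all $m,\ell\in\N$ and $j$, with $C_\varepsilon>0$ and $\rho_\varepsilon\to 1$ as $\varepsilon\to 0$. I would derive this by identifying the product with the ratio of volume distortions of $A^\ell$ on $V_j(m)$ and $V_{j+1}(m)$ and then estimating each distortion via the hypothesis: the uniform convergence $|Df^n(x)u|^{1/n}\to\bar\lambda_i$ on the unit sphere of $E_i$ yields bounds $C_\varepsilon^{-1}(\bar\lambda_i-\varepsilon)^n\le|Df^n(x)u|\le C_\varepsilon(\bar\lambda_i+\varepsilon)^n$, and the cocycle identity $Df^\ell(f^m(x))\,Df^m(x)=Df^{m+\ell}(x)$ converts these into segment bounds with multiplicative slack $\rho_\varepsilon^m$, which is exactly the source of the $\theta^n$ factor in~(\ref{Ord}). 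Since $i(\cdot)$ is non-increasing (so $i\le j$ implies $\bar\lambda_{i(j)}\le\bar\lambda_{i(i)}$), the left-hand side of~(\ref{Ord}) is then bounded by $C'_\varepsilon\,\rho_\varepsilon^{3m}\,\mu_\varepsilon^\ell$, where $\mu_\varepsilon\to\bar\lambda_r<1$ as $\varepsilon\to 0$; fixing $\lambda\in(\bar\lambda_r,1)$ once and for all and, given $\theta>1$, choosing $\varepsilon$ so small that $\mu_\varepsilon\le\lambda$ and $\rho_\varepsilon^3\le\theta$ verifies~(\ref{Ord}), and Theorem~1 yields $W^s(x)\cong\C^d$.

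\emph{Main obstacle.} The principal difficulty lies in the core estimate: the propagated subspaces $E_i(n)=Df^n(x)E_i$ form only a direct sum---not an orthogonal one---so the volume distortions of $A^\ell$ on $V_j(m)$ involve sines of the mutual principal angles between these subspaces, which might a priori degenerate with $n$. Showing that these angle terms can be absorbed into the subexponential factor $\rho_\varepsilon^m$ is precisely where the uniformity in the Lyapunov hypothesis (rather than mere existence of the limits) plays its essential role, and it is what allows one to avoid the full Oseledec machinery.
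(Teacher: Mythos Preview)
Your overall strategy---reduce to Theorem~1 by reading $f$ in local stable charts, triangularize the linear parts, and verify condition~(\ref{Ord})---is exactly the paper's. The difference lies entirely in how the diagonal-entry estimate is obtained.

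The paper does not build the flag $V_j(n)$ by Gram--Schmidt and then estimate the diagonal entries through volume distortions on $V_j(m)$ versus $V_{j+1}(m)$. Instead it invokes a known fact (Ma\~n\'e, \emph{Ergodic Theory and Differentiable Dynamics}, \S IV.11): the angles between the propagated subspaces $Df^n(x)E_i$ stay bounded away from zero. This is precisely the obstruction you flag in your last paragraph, and the paper simply cites it rather than re-deriving it from the uniform Lyapunov hypothesis. With the angle bound in hand, a \emph{bounded} linear non-autonomous conjugacy puts $L_n$ into block-diagonal form with respect to an orthogonal splitting $\C^d=\bigoplus_i X_i$ (one block per exponent), and one may further make each block lower triangular. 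Then $\lambda_{n,0}(j)$ is literally an eigenvalue of $L_{n,0}|_{X_i}$ with a unit eigenvector in $X_i$, so the hypothesis gives directly $|\lambda_{n,0}(j)|^{1/n}\to\hat\lambda_j$; a three-line computation with the cocycle identity $\lambda_{n+\ell,n}(j)=\lambda_{n+\ell,0}(j)/\lambda_{n,0}(j)$ then yields~(\ref{Ord}). No volumes, no quotients, no tracking of sines of principal angles.

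Your route is not wrong---the identification $\prod_k|\lambda_k(j)|=\mathrm{vol}(A^\ell|_{V_j})/\mathrm{vol}(A^\ell|_{V_{j+1}})$ is correct, and the angle degeneration can indeed be shown to be at worst subexponential (which is all that the $\theta^n$ in~(\ref{Ord}) requires). But it is considerably more work than the paper's argument, and your proposal leaves the hardest step (absorbing the angle terms into $\rho_\varepsilon^m$) as an assertion rather than an argument. The paper's shortcut buys a two-paragraph proof of the core estimate at the price of one external reference; your approach would be more self-contained but would need a genuine lemma where you currently have a sentence.
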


By Oseledec's theorem, the hypotheses in this statement hold for almost every point $x$, with respect to any invariant probability measure on $\Lambda$, and so this theorem is a somewhat more precise statement than the previously mentioned result of M. Jonsson and D. Varolin; thus using our approach it is possible to recover in a unified way all the main results on this subject 
already present in the literature.

Our proof of the above theorems is based on two steps, that we keep separate. The first and main step is a formal non-autonomous conjugacy result, which states that under the assumption (\ref{Ord}) there exists a sequence of formal series $(h_n)$ and a sequence $(g_n)$ of special triangular automorphisms of $\C^d$ (see Section \ref{scoh} for the definition) such that
\[
h_{n+1} \circ f_n = g_n \circ h_n\; , \qquad \forall n\in \N\; .
\]
Here the important fact is that both $(g_n)$ and $(h_n)$ can be chosen to have subexponential growth (roughly speaking, subexponential estimates here replace the slowly varying functions of \cite{jv02}); see Section \ref{sfac} for a precise statement. The main point in the proof of this first step is a careful estimate, proven in Section \ref{scoh}, on the norm of the conjugacy operator by a lower triangular matrix on the space of $k$-homogeneous polynomial endomorphisms of $\C^d$. It is this extimate that leads to condition (\ref{Ord}).

We also exhibit a counterexample which shows that the first step fails when condition (\ref{Ord}) is not fulfilled. Indeed, we show that if the strictly increasing sequence of natural numbers $(s_k)$ grows fast enough, for instance if $s_{k+1}=10^{s_k}$, then the sequence of automorphisms of $\C^2$
\[
f_n (z_1,z_2) := \left\{ \begin{array}{ll} 
\Bigl( \frac{1}{4} z_1 - \frac{1}{4} z_2^2, \frac{1}{2} z_2 \Bigr) \;, & \mbox{if } s_{2k} \leq n < s_{2k+1}\; , \\ \Bigl( \frac{1}{2} z_1, \frac{1}{4} z_2 - \frac{1}{4} z_1^2 \Bigr)\;, & \mbox{if } s_{2k+1}\leq n < s_{2k+2} \; , \end{array} \right.
\]
which does not satisfy (\ref{Ord}),
does not admit a \emph{bounded} formal non-autonomous conjugacy at the level of 2-jets to any sequence of special triangular automorphisms. Notice that here (\ref{uniforme}) holds with $\lambda=1/2$ and any $\nu<1/4$ (up to the restriction to a sufficiently small ball), so this example also shows that the above mentioned condition $\nu^{-1}\lambda^2<1$ is sharp for the issue of the existence of a bounded conjugacy to a sequence of special triangular automorphisms.

The second step is the non-autonomous version of the well-known fact that two germs of holomorphic contractions which are conjugated as jets of a sufficiently high degree are actually conjugated as germs. A result of this kind has been proven by F.\ Berteloot, C.\ Dupont and L.\ Molino in \cite{bdm08}. In the appendix of this paper we provide a different and more concise proof, based on the implicit mapping theorem.

\medskip

The authors would like to thank Eric Bedford and Jasmin Raissy for several useful conversations. The support of the INdAM grant \emph{Local discrete dynamics in one, several and infinitely many variables} during the initial stages of this work is gratefully acknowledged. Part of this work was done while the last two authors were visiting the University of Leipzig and the Max Planck Institut f\"ur Mathematik in den Naturwissenschaften. We would like to thank both institutions for their hospitality and the Humboldt Foundation and the Ateneo Italo Tedesco for financial support in the form of a Humboldt Fellowship and of a Vigoni Project.   

\numberwithin{equation}{section}
\section{A lemma in discrete linear control theory}

Let $E$ be a finite dimensional complex vector space, endowed with the norm $|\cdot|$. We denote by $\|\cdot\|_{L(E)}$ the corresponding operator norm on the space of linear endomorphisms of~$E$. 

If $(A_n)$ is a sequence of linear endomorphisms of $E$ and $n\geq m \geq 0$, we denote by $A_{n,m}$ the composition
\[
A_{n,m} = A_{n-1} A_{n-2} \cdots A_m\, , \qquad A_{n,n} = I \; ,
\]
which satisfies, for any $n\geq m \geq \ell \geq 0$, 
\begin{equation}
\label{ciclo}
A_{n,m} A_{m,\ell} = A_{n,\ell}\; .
\end{equation}
With this notation, the general solution $(u_n)$ of the equation
\begin{equation}
\label{due}
u_{n+1} = A_n u_n + b_n\;, \qquad \forall n\in \N\; ,
\end{equation}
can be written in the compact form
\begin{equation}
\label{solution}
u_n = A_{n,0} u_0 + \sum_{j=0}^{n-1} A_{n,j+1} b_j\; ,
\end{equation}
as shown by a direct computation. A sequence $(u_n)\subset E$ is said to be \emph{subexponential} if for every $\theta>1$ there exists $B=B(\theta)>0$ such that
\[
|u_n| \le B\theta^n  
\]
for all $n\ge 0$.

\begin{lem}
\label{Uno}
Let $(A_n)$ be a sequence of linear automorphisms of $E$ such that for every $\theta>1$ there exist positive numbers $C(\theta)$ and $\alpha(\theta)<1$ for which 
\begin{equation}
\label{uno}
\bigl\| A_{n+\ell,n}^{-1} \bigr\|_{L(E)} \leq C(\theta) \theta^n \alpha(\theta)^\ell\; , \qquad \forall n,\ell \in \N\;.
\end{equation}
Then for every subexponential sequence $(b_n)\subset E$, the equation (\ref{due}) has a unique subexponential solution $(u_n)\subset E$. 
\end{lem}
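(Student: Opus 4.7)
The plan is to reverse the forward solution formula (\ref{solution}). Since each $A_n$ is invertible, I can iterate the relation $u_n = A_n^{-1}(u_{n+1}-b_n)$ to obtain a backward formula
\[
u_n = A_{n+\ell,n}^{-1} u_{n+\ell} - \sum_{j=n}^{n+\ell-1} A_{j+1,n}^{-1} b_j, \qquad \forall\, \ell\in\N,
\]
which is the natural object here because the hypothesis (\ref{uno}) controls precisely the \emph{inverse} propagators. The argument then splits into uniqueness and existence, both driven by a convenient choice of the two parameters $\theta$ (from (\ref{uno})) and $\theta'$ (governing the subexponential growth of the data).

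For uniqueness, if $u^{(1)},u^{(2)}$ are two subexponential solutions, their difference $v_n$ satisfies $v_{n+1}=A_nv_n$, hence $v_n=A_{n+\ell,n}^{-1}v_{n+\ell}$. Fixing any $\tau>1$, writing $|v_{n+\ell}|\leq B(\theta')(\theta')^{n+\ell}$ and applying (\ref{uno}) gives
\[
|v_n|\leq C(\theta)B(\theta')(\theta\theta')^n(\alpha(\theta)\theta')^{\ell}.
\]
Since $\alpha(\theta)<1$ for every $\theta>1$, I can pick $\theta,\theta'>1$ with $\alpha(\theta)\theta'<1$, so letting $\ell\to\infty$ forces $v_n=0$.

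For existence, I would define
\[
u_n := -\sum_{j=n}^{\infty} A_{j+1,n}^{-1} b_j,
\]
bound the $j$-th term via (\ref{uno}) and subexponentiality of $b_j$ by $C(\theta)B(\theta')\alpha(\theta)(\theta\theta')^n(\alpha(\theta)\theta')^{j-n}$, and show absolute convergence exactly as above. To obtain any prescribed subexponential growth rate $\tau>1$ for $u_n$, I would first pick $\theta\in(1,\tau)$ and then $\theta'\in(1,\min\{\tau/\theta,\,1/\alpha(\theta)\})$; this intersection is nonempty because $\alpha(\theta)<1$, so both constraints $\theta\theta'\leq\tau$ and $\alpha(\theta)\theta'<1$ hold simultaneously. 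Finally, using $A_n A_{j+1,n}^{-1}=A_{j+1,n+1}^{-1}$ for all $j\geq n$ (with the convention $A_{n+1,n+1}^{-1}=I$), a one-line telescoping shows $A_nu_n+b_n=u_{n+1}$.

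The only mildly subtle point, which I view as the main obstacle, is the coupling between the two free parameters $\theta$ and $\theta'$: the prefactor $\theta^n$ in (\ref{uno}) means we cannot simply take $\theta$ large without paying a growth price, while the summability of the geometric tail demands $\alpha(\theta)\theta'<1$. The resolution is to treat the required growth rate $\tau$ as given and choose $\theta$ and $\theta'$ as functions of $\tau$ (and of $\alpha$), as outlined above; the argument then closes uniformly in $\tau$.
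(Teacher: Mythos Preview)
Your proof is correct and follows essentially the same approach as the paper's: both define the solution by the same backward series $u_n=-\sum_{j\geq n}A_{j+1,n}^{-1}b_j$ (the paper writes it as $-\sum_{\ell\geq 1}A_{n+\ell,n}^{-1}b_{n+\ell-1}$), bound it by a geometric tail after a suitable coupling of the two growth parameters, and handle uniqueness via the homogeneous equation. The only cosmetic differences are that the paper takes $\theta' = \omega \leq \theta^{1/2}$ against the bound (\ref{uno}) with parameter $\theta^{1/2}$, and proves uniqueness by showing nonzero homogeneous solutions grow like $\alpha(2)^{-n}$ rather than by your $\ell\to\infty$ argument at fixed $n$; both choices lead to the same conclusion.
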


\begin{proof}
Let $\theta>1$. Choose $1<\omega\leq \theta^{1/2}$ such that $\omega\alpha(\theta^{1/2})<1$. Since $(b_n)$ is subexponential, there exists a number $B>0$ such that
\[
|b_n| \leq B \omega^n\;, \qquad \forall n\in \N\; .
\]
Together with the assumption (\ref{uno}) this implies the estimate
\begin{equation}
\label{ssttmmaa}
\begin{split}
|A_{n+\ell,n}^{-1} b_{n+\ell-1}| \leq C(\theta^{1/2}) \theta^{n/2} \alpha(\theta^{1/2})^\ell B \omega^{n+\ell-1} &= C(\theta^{1/2}) B \omega^{-1} \theta^{n/2} \omega^n \left( \alpha(\theta^{1/2}) \omega \right)^\ell \\ &\leq  C(\theta^{1/2}) B  \theta^n \left( \alpha(\theta^{1/2}) \omega \right)^\ell.
\end{split}
\end{equation}
Since $\alpha(\theta^{1/2})\omega<1$, the above estimate implies that for every $n\in \N$ the series
\[
u_n := - \sum_{\ell=1}^{\infty} A_{n+\ell,n}^{-1} b_{n+\ell-1}\;,
\]
which corresponds to formula (\ref{solution}) with
\[
u_0 = - \sum_{\ell=1}^{\infty} A_{\ell,0}^{-1} b_{\ell-1}\;,
\]
converges absolutely. In particular, $(u_n)$ is a solution of  (\ref{due}) and by (\ref{ssttmmaa})
\[
|u_n| \leq \frac{C(\theta^{1/2}) B}{1-\alpha(\theta^{1/2}) \omega} \theta^n.
\]
Since $\theta>1$ is arbitrary, $(u_n)$ is subexponential. Finally, the uniqueness statement
holds because the homogeneous equation $v_{n+1} = A_n v_n$ does not
have non-zero subexponential solutions since, by (\ref{uno}),
\[
|v_n| = | A_{n,0}  v_0| \geq \bigl\|A_{n,0}^{-1} \bigr\|_{L(E)}^{-1} |v_0| \geq \frac{|v_0|}{C(2)} \alpha(2)^{-n} 
\]
diverges exponentially if $|v_0|\neq 0$, because $\alpha(2)<1$.
\end{proof}

\begin{rem}
In general, an equation of the form $u_{n+1} = f_n(u_n) 
= A_n u_n + b_n$ may have no subexponential solution $(u_n)$, even if the 
sequences $(b_n)$ and $(A_n)$ are bounded. Actually, 
by the formula (\ref{solution}), every solution $(u_n)$ satisfies the estimate
\begin{equation}
\label{ub}
|u_n| \leq a^n |u_0| + b \frac{a^n-1}{a-1}\; ,
\end{equation}
where 
\[
a:= \sup_{n\in \N} \|A_n\|_{L(E)} \;, \quad b:= \sup_{n\in \N} |b_n|\; ,
\]
but the constant $a$ in (\ref{ub}) might be sharp for each solution. A one-dimensional example with $a>1$ is given by 
\[
f_n(u) = \begin{cases} \frac{1}{2} u & \mbox{if } n=0 \mbox{ or } (2k)! \leq n
  < (2k+1)!\;, \\   
  2 u -1 & \mbox{if } (2k+1)! \leq n <
  (2k+2)!\;, \end{cases} 
\]
for every $k\in \N$.
Indeed, in this case $a=2$, $b=1$, so (\ref{ub}) gives us
\[
|u_n| \leq 2^n (|u_0| + 1)\;, \qquad \forall n\in \N\; ,
\]
from which we get
\begin{eqnarray*}
|u_{(2k+1)!}| &=& |f_{(2k+1)!-1} \circ \dots \circ f_{(2k)!} ( u_{(2k)!})|
 = 2^{-(2k+1)! + (2k)!} |u_{(2k)!}| \\ 
 &\leq& 2^{-(2k+1)! + 2 (2k)!}
 (|u_0|+1) = 2^{-(2k-1)(2k)!} (|u_0|+1)= o(1)\;. 
\end{eqnarray*}
Since the map $u\mapsto 2u-1$ is a homothety with center $u=1$ and expanding factor $2$,  
\begin{eqnarray*}
|u_{(2k)!}-1| &=& |f_{(2k)!-1} \circ \dots \circ f_{(2k-1)!} (
 u_{(2k-1)!} )-1| \\ 
 & = & 2^{(2k)! - (2k-1)!} |u_{(2k-1)!} -1| =
 2^{(2k)! - (2k-1)!} (1+o(1))\;.
\end{eqnarray*}
Therefore, the equality $|u_n| = 2^{n + o(n)}$ holds on a subsequence, so the constant $a=2$ is sharp in (\ref{ub}), that is no solution $(u_n)$ is $O(\alpha^n)$ with $\alpha<2$. 
\end{rem}

Let $V$ be a linear subspace of $E$, playing the role of a control space.
If we are allowed to perturb the sequence $(b_n)$ by a sequence in $V$ and if $V$ is preserved by all the automorphisms $A_n$, we find the following generalization of Lemma \ref{Uno}, where the operator norm in (\ref{uno}) is replaced by the operator norm on the quotient $E/V$. 

\begin{lem}
\label{Due}
Let $V$ be a linear subspace of $E$ and let $\pi\colon E \rightarrow E/V$
be the quotient projection. Let $(A_n)$ be a subexponential
sequence of linear automorphisms of $E$ such that $A_n V = V$ for every $n\in \N$, and 
assume that for every $\theta>1$ there exist positive numbers $C(\theta)$ and $\alpha(\theta)<1$ for which 
\begin{equation}
\label{unobis}
\bigl\| A_{n+\ell,n}^{-1} \bigr\|_{L(E/V)} \leq C(\theta) \theta^n \alpha(\theta)^{\ell}\; , \qquad \forall n,\ell \in \N\;.
\end{equation}
Then for every subexponential sequence $(b_n)\subset E$ there is a subexponential sequence $(u_n)\subset E$, unique modulo $V$, and a subexponential sequence $(v_n)\subset V$ such that
\[
u_{n+1} = A_n u_n + b_n + v_n\; .
\]
\end{lem}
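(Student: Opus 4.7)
The plan is to reduce the statement to Lemma~\ref{Uno} by passing to the quotient $E/V$, then lifting a solution back to $E$ via a chosen linear complement of $V$.

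First, since $A_n V = V$, each $A_n$ descends to a well-defined linear automorphism $\bar A_n$ of $E/V$; the composition $\bar A_{n+\ell, n} = \bar A_{n+\ell-1}\cdots\bar A_n$ is the quotient map of $A_{n+\ell,n}$, so its inverse is the quotient map of $A_{n+\ell,n}^{-1}$. Hence the hypothesis (\ref{unobis}) is exactly the hypothesis (\ref{uno}) for the sequence $(\bar A_n)$ on $E/V$. Applying Lemma~\ref{Uno} to $(\bar A_n)$ and to the subexponential sequence $\bar b_n := \pi(b_n) \in E/V$ (which is subexponential because $\pi$ is a bounded linear operator), we obtain a unique subexponential sequence $(\bar u_n) \subset E/V$ satisfying
\[
\bar u_{n+1} = \bar A_n \bar u_n + \bar b_n\; , \qquad \forall n\in \N\; .
\]

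Now choose any linear complement $W$ of $V$ in $E$, so that $\pi|_W \colon W \to E/V$ is a linear isomorphism. Define $u_n \in W$ to be the unique element with $\pi(u_n) = \bar u_n$; since $(\pi|_W)^{-1}$ is a bounded linear map, the sequence $(u_n)$ is subexponential. Set
\[
v_n := u_{n+1} - A_n u_n - b_n\; .
\]
Then $\pi(v_n) = \bar u_{n+1} - \bar A_n \bar u_n - \bar b_n = 0$, so $v_n \in V$. Moreover $(v_n)$ is subexponential, since each of the three summands is: indeed, the subexponentiality of $(A_n u_n)$ follows from the subexponential bound on $\|A_n\|_{L(E)}$ and on $|u_n|$, because the product of two subexponential sequences is subexponential (for any $\theta>1$, estimate both factors by $\theta^{n/2}$ up to constants).

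Finally, for uniqueness, suppose $(u_n')$, $(v_n')$ is another such pair. The difference $w_n := u_n - u_n'$ satisfies $w_{n+1} = A_n w_n + (v_n - v_n')$, and projecting to $E/V$ gives the homogeneous equation $\bar w_{n+1} = \bar A_n \bar w_n$; by the uniqueness part of Lemma~\ref{Uno} applied to $(\bar A_n)$, the subexponential sequence $(\bar w_n)$ vanishes identically, so $u_n - u_n' \in V$ for every $n$. There is no real obstacle here: the only point requiring slight care is that the lift $(u_n)$ of $(\bar u_n)$ be itself subexponential, which is why we choose a linear complement $W$ rather than an arbitrary (possibly unbounded) set-theoretic lift, and this in turn is where the subexponential growth of $(A_n)$ is used to propagate subexponentiality to $(v_n)$.
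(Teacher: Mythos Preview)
Your proof is correct and follows essentially the same route as the paper: project to $E/V$, apply Lemma~\ref{Uno} there, then lift the solution and define $v_n$ as the defect. The only cosmetic difference is that the paper lifts each $\xi_n$ to a minimal-norm representative $u_n$ with $|u_n|=|\xi_n|$, whereas you fix a linear complement $W$ and use the bounded inverse $(\pi|_W)^{-1}$; both choices make $(u_n)$ subexponential and lead to the same conclusion, and your version spells out more explicitly why $(v_n)$ is subexponential and why uniqueness modulo $V$ holds.
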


\begin{proof}
Since $(b_n)$ is subexponential, so is $(\pi \, b_n$) and by Lemma \ref{Uno} applied to the vector space $E/V$ there is a unique subexponential
sequence $(\xi_n)\subset E/V$ such that $\xi_{n+1} = A_n \xi_n +
\pi \, b_n$. Therefore, there is a sequence $u_n \in \xi_n$ such that
\begin{eqnarray*}
|u_n|=|\xi_n| \qquad \mbox{and} \qquad
v_n:= u_{n+1} - A_n u_n - b_n \in V\;.
\end{eqnarray*}
These sequences $(u_n)$ and $(v_n)$ satisfy all the requirements.
\end{proof}

\begin{rem}
All the results of this section hold, with minor changes, if $E$ is an infinite dimensional complex Banach space. 
\end{rem}

\section{Linear conjugacy operators on the space of homogeneous polynomial maps}
\label{scoh}

We endow $\C^d$ with the norm
\begin{equation}
\label{norm}
|z| := \max_{1\leq j \leq d}  |z_j | \; , \qquad \forall z\in \C^d\; \,
\end{equation}
whose open ball of radius $r$ about $0$, that we denote by $B_r$, is a polydisk. The corresponding operator norm on the space of linear endomorhisms of $\C^d$ is denoted by $\|\cdot\|$.

Let $\mathscr{H}^k$ be the vector space of homogeneous polynomial maps $p\colon \C^d \rightarrow \C^d$ of degree $k$. The space $\mathscr{H}^k$ is naturally isomorphic to the space of  $\C^d$-valued symmetric $k$-linear maps on $\C^d$. We use the same symbol to denote the $k$-linear form and the corresponding $k$-homogeneous polynomial, adopting the notation:
\[
p(z) = p[z]^k = p[\underbrace{z,\dots,z}_{k}] \; , \qquad \forall z\in \C^d \;.
\]
The space $\mathscr{H}^k$ is normed by
\begin{equation}
\label{norma}
\|p\| := \sup_{z\in \C^d\setminus \{0\}} \frac{|p(z)|}{|z|^k} = \|p\|_{\infty,B_1}\;.
\end{equation}
Set
\[
\mathbb{A}_k = \set{\alpha\in \N^d}{|\alpha|=k}\;, \quad 
\mathbb{J} =\{1,\dots,d\}\;,
\]
where $|\alpha|:=\alpha_1 + \dots + \alpha_d$ is the degree of the multi-index
$\alpha$. Then  $\mathscr{H}^k$ is spanned by the basis
\[
z^{\alpha} e_i, \quad (\alpha,i) \in \mathbb{A}_k \times 
\mathbb{J}\;,
\]
where $e_1,\ldots, e_d$ is the canonical basis of $\C^d$, and has dimension
\[
\dim \mathscr{H}^k = d \, \mathrm{card}\,  \mathbb{A}_k = d \binom{k+d-1}{d-1}\; .
\]
An equivalent norm on $\mathscr{H}^k$ is clearly the maximum of the absolute values of the coefficients with respect to this basis and the Cauchy formula implies
\begin{equation}
\label{equivalente}
\max_{(\alpha,i) \in \mathbb{A}_k \times \mathbb{J}} |c_{\alpha,i}| \leq \|p\| \leq \binom{k+d-1}{d-1} \max_{(\alpha,i) \in \mathbb{A}_k\times \mathbb{J}} |c_{\alpha,i}|\;,
\end{equation}
for $p(z) = \sum_{\alpha,i} c_{\alpha,i} z^{\alpha} e_i$.

Consider the flag
\[
(0) = E_0 \subset E_1 \subset E_2 \subset \dots \subset E_{d-1} \subset E_d = \C^d\;,
\]
where $E_j=\Span{e_{d-j+1},\ldots,e_d}$ is the space of vectors $(z_1,\dots,z_d)$ such that $z_i=0$ for every $i\leq d-j$. A holomorphic map $f\colon\C^d\rightarrow \C^d$ is said {\em  triangular} if it preserves this flag, that is $f(E_j) \subseteq E_j$ for every $j=0,\dots,d$. It is said {\em strictly triangular} if $f(E_j) \subseteq E_{j-1}$ for every $j=1,\dots,d$. A holomorphic map $f\colon\C^d \rightarrow \C^d$ is triangular (respectively, strictly triangular) if and and only if $f(0)=0$ and for every $j=1\dots,d$ the $j$-the component of $f$ depends only on the variables $z_1,\dots ,z_j$ (respectively, $z_1,\dots, z_{j-1}$). In particular, a linear endomorphism of $\C^d$ is triangular (respectively, strictly triangular) if and only if the associated matrix is lower triangular (respectively, strictly lower triangular). Notice that the composition of triangular maps is still triangular, and it is strictly triangular if at least one of the maps is so. 
If we set
\[
\mathbb{T}_k := \set{(\alpha,i)\in \mathbb{A}_k \times \mathbb{J}}{
\alpha_j = 0 \; \forall j\geq i}\;, 
  \quad \mathscr{T}^k = \mathrm{span}\, \set{z^{\alpha}
    e_i}{(\alpha,i) \in \mathbb{T}_k}\; ,
\]
then a polynomial map 
\[
h\colon\C^d \rightarrow \C^d, \quad h = \sum_{k=1}^m h^k, \quad h^k \in \mathscr{H}^k\;,
\]
is strictly triangular if and only if each $h^k$ belongs to
$\mathscr{T}^k$.

We are interested in polynomial automorphisms of $\C^d$ that are strictly triangular perturbations of triangular linear maps, that is, maps $g\colon\C^d \rightarrow \C^d$ of the form
\[
g(z) = D z + h(z)\; , \qquad \forall z\in \C^d\;, 
\]
where $D$ is a diagonal linear automorphism and $h$ is a strictly triangular polynomial map. Equivalently, $g$ can be written component-wise as
\begin{equation}
\label{triangular}
\begin{split}
g_1(z) & =  \lambda_1 z_1, \\
g_2(z) & = \lambda_2 z_2 + h_2(z_1), \\
& \vdots  \\
g_d(z) & =  \lambda_d z_d + h_d(z_1,\ldots,z_{d-1}),
\end{split}
\end{equation}
where each polynomial $h_j$ depends only on $z_1,\ldots,z_{j-1}$ and vanishes at the origin, and none of the numbers $\lambda_j$ is zero. The above expression easily implies that $g$ is an automorphism and that its inverse has the same form. 
Throughout this paper, we shall briefly refer to automorphisms of the form (\ref{triangular}) as {\em special triangular automorphisms}. More properties of special triangular
automorphisms are proved in Section \ref{triautosec}. 

A linear automorphism $L$ of $\C^d$ induces a linear conjugacy operator $\mathscr{A}_L\colon\ca{H}^k\to\ca H^k$ by setting
\begin{equation}
\label{conop}
\mathscr{A}_L p := L^{-1} \, p \circ L\; .
\end{equation}
This operator depends controvariantly on $L$, that is
\[
\mathscr{A}_{LM} = \mathscr{A}_M \mathscr{A}_L\; .
\]
If the linear automorphism $L$ is lower triangular, then the
subspace $\mathscr{T}^k$ of strictly triangular $k$-homogeneous polynomial maps is
$\ca{A}_L$-invariant. 
If $D$ is a diagonal linear automorphism, with
\[
D = \left(
\begin{matrix}\delta(1) & & 0 \\ & \ddots & \\ 0 & & \delta(d) \end{matrix}\right)\;,
\]
then the conjugacy operator $\mathscr{A}_D$ 
is diagonal with respect to the standard basis of $\mathscr{H}^k$; indeed
\[
\mathscr{A}_D (z^{\alpha} e_i) =  \delta(1)^{\alpha_1} \dots \delta(i)^{\alpha_i-1}
\dots \delta(d)^{\alpha_d} (z^{\alpha} e_i) \; ,
\]
for all $(\alpha,i) \in \mathbb{A}_k \times \mathbb{J}$.
The induced operator on the quotient $\mathscr{H}^k/\mathscr{T}^k$ (still denoted by $\mathscr{A}_D$) is diagonal with respect to the basis
\[
z^{\alpha}e_i + \mathscr{T}^k\;, \quad (\alpha,i) \in (\mathbb{A}_k
\times \mathbb{J} ) \setminus \mathbb{T}_k\;,
\] 
and the eigenvalue corresponding to the eigenvector $z^{\alpha}e_i + \mathscr{T}^k$ is the number
\[
\delta(1)^{\alpha_1} \dots \delta(i)^{\alpha_i-1}
\dots \delta(d)^{\alpha_d} \; ,
\]
for all $(\alpha,i) \in
(\mathbb{A}_k \times \mathbb{J}) \setminus \mathbb{T}_k$.
Therefore its operator norm can be estimated as follows
\begin{equation}
\label{nonmon0}
\|\mathscr{A}_D\|_{L(\mathscr{H}^k/\mathscr{T}^k)} \leq c(k,d)
\max_{(\alpha,i)\in (\mathbb{A}_k\times \mathbb{J}) \setminus
  \mathbb{T}_k} \left| \delta(1)^{\alpha_1} \dots \delta(i)^{\alpha_i-1}
\dots \delta(d)^{\alpha_d} \right| \; .
  \end{equation}
Here, the presence of the constant $c(k,d)$ is due to the fact that
the norm $\|\cdot\|$ defined by (\ref{norma}) is not a monotone function of the coordinates with respect to the standard basis of $\mathscr{H}^k$. Such a constant would have been 1 if $\mathscr{H}^k$ were endowed with the (monotone) maximum norm of the coefficients with respect to the basis $\{z^{\alpha} e_i\}$, so (\ref{equivalente}) implies that a suitable constant in (\ref{nonmon0}) is 
\[
c(k,d) = \binom{k+ d -1}{d-1}\; .
\]
By the definition of the set $\T_k$, we can reformulate the expression for 
the maximum which appears in (\ref{nonmon0}) and get
\begin{equation}
\label{nonmon}
\|\mathscr{A}_D\|_{L(\mathscr{H}^k/\mathscr{T}^k)} \leq c(k,d)
\left( \max_{1\leq h \leq d} |\delta(h)| \right)^{k-1} \max_{1\leq i \leq j \leq d} \frac{|\delta(j)|}{|\delta(i)|}  \; .
\end{equation}

Let $L_1,\dots,L_{\ell}$ be lower triangular linear automorphisms of $\C^d$, and  let $D_1,\dots,D_{\ell}$ be their diagonal parts. Therefore, $L_n = D_n + N_n$ and $L_n^{-1} = D_n^{-1} + \tilde{N}_n$, where $N_n$ and $\tilde{N}_n$ are strictly lower triangular linear endomorphisms of $\C^d$, for every $n=1,\dots,\ell$. Set $F_n := D_n^{-1} N_n$ and $\tilde{F}_n := D_n \tilde{N}_n$. 
Since $D_n$ is the diagonal part of $L_n$, we have
\[
\|L_n\| \geq \max_{1\leq j \leq d} |L_n e_j| \geq
\max_{1\leq j \leq d} |D_n e_j| = \|D_n\| \; ,
\]
hence
\begin{equation}
\label{mons1}
\|\tilde{F}_n\| = \|D_n L_n^{-1} - I \| \leq
\|D_n\| \|L_n^{-1}\| + 1 \leq
\|L_n\| \|L_n^{-1}\|+ 1\; . 
\end{equation}
Symmetrically, we have
\begin{equation}
\label{mons2}
\|F_n\| \leq \|L_n\|\|L_n^{-1}\|+ 1\; . 
\end{equation}
If we denote by $\mathbbm{2}$ the set $\{0,1\}$,
we have the following useful representation formula:

\begin{lem}
For every $p\in \mathscr{H}^k$ the $k$-homogeneous polynomial map $\mathscr{A}_{L_{\ell}\cdots L_1} p$ equals
\begin{equation}
\label{svil}
\sum_{\alpha,\beta^1,\dots,\beta^k} D_{\ell}  F_{\ell}^{\alpha_{\ell}}  \cdots D_1 F_1^{\alpha_1} \, p \bigl[ D_1^{-1} \tilde{F}_1^{\beta^1_1}  \cdots D_{\ell}^{-1} \tilde{F}_1^{\beta^1_{\ell}}, \dots, D_1^{-1} \tilde{F}_1^{\beta^k_1}  \cdots D_{\ell}^{-1} \tilde{F}_{\ell}^{\beta^k_{\ell}} \bigr]\; ,
\end{equation}
where the sum is taken over all $\alpha,\beta^1,\dots,\beta^k$ in $\mathbbm{2}^{\ell}$ with $|\alpha|<d$, $|\beta^1|<d$, $\dots$, $|\beta^k|<d$. 
\end{lem}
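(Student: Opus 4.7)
The plan is a direct algebraic expansion, resting on the two elementary identities
$$L_n \;=\; D_n + N_n \;=\; \sum_{a\in\mathbbm{2}} D_n F_n^{a}, \qquad L_n^{-1} \;=\; D_n^{-1} + \tilde N_n \;=\; \sum_{b\in\mathbbm{2}} D_n^{-1}\tilde F_n^{b},$$
which are trivial since $D_n F_n^{0}=D_n$, $D_n F_n^{1}=D_n D_n^{-1}N_n=N_n$, and analogously for $\tilde F_n=D_n\tilde N_n$. Multiplying these out in the two relevant orderings (and checking by induction on $\ell$) yields
$$L_\ell L_{\ell-1}\cdots L_1 \;=\; \sum_{\alpha\in\mathbbm{2}^{\ell}} D_\ell F_\ell^{\alpha_\ell}\cdots D_1 F_1^{\alpha_1}, \qquad L_1^{-1}L_2^{-1}\cdots L_\ell^{-1} \;=\; \sum_{\beta\in\mathbbm{2}^{\ell}} D_1^{-1}\tilde F_1^{\beta_1}\cdots D_\ell^{-1}\tilde F_\ell^{\beta_\ell}.$$

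Next I would truncate both sums to multi-indices of weight less than $d$ by the following shift-of-support observation: any $d\times d$ strictly lower triangular matrix sends $\Span{e_j,\ldots,e_d}$ into $\Span{e_{j+1},\ldots,e_d}$, while a diagonal matrix preserves each $\Span{e_j,\ldots,e_d}$. By a simple induction on the number of factors, a product containing $m$ strictly lower triangular factors (interspersed in any order with diagonal ones) therefore sends $e_j$ into $\Span{e_{j+m},\ldots,e_d}$, which is $(0)$ as soon as $m\geq d$. Since each $F_n$ and each $\tilde F_n$ is strictly lower triangular (being the product of a diagonal matrix and a strictly lower triangular matrix), every summand with $|\alpha|\geq d$ or $|\beta|\geq d$ is zero, and both sums may be restricted to weights less than $d$.

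The final step is purely formal. From the definition and the fact that $p$ is the $k$-homogeneous polynomial associated to the symmetric $k$-linear form $p[\,\cdot,\ldots,\cdot\,]$, one has
$$\mathscr{A}_{L_\ell\cdots L_1}p(z) \;=\; (L_\ell\cdots L_1)^{-1}\,p\bigl[(L_\ell\cdots L_1)z,\ldots,(L_\ell\cdots L_1)z\bigr].$$
Plugging in the (truncated) expansion of $(L_\ell\cdots L_1)^{-1}=L_1^{-1}\cdots L_\ell^{-1}$ as the leading factor, and using $k$-linearity to distribute the forward expansion of $L_\ell\cdots L_1$ independently over each of the $k$ slots (introducing one multi-index per slot), produces the representation formula~(\ref{svil}) after collecting everything under a single multiple sum and renaming indices.

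All steps are pure bookkeeping; there is no real conceptual obstacle. The only mildly delicate point is the shift-of-support lemma underlying the truncation of both sums, which I would formalize by induction on the number of strictly lower triangular factors scanned while reading the product from right to left.
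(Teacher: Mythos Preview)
Your proposal is correct and follows essentially the same approach as the paper: expand $L_\ell\cdots L_1$ and $L_1^{-1}\cdots L_\ell^{-1}$ via the binary identities $L_n=\sum_{a\in\mathbbm{2}}D_nF_n^a$ and $L_n^{-1}=\sum_{b\in\mathbbm{2}}D_n^{-1}\tilde F_n^b$, truncate each sum to $|\alpha|<d$ by nilpotency of strictly lower triangular products, and then distribute by $k$-linearity. Your ``shift-of-support'' argument is exactly the paper's one-line observation that any product containing at least $d$ strictly lower triangular factors vanishes, just stated with a bit more detail.
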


\begin{proof}
If $\gamma\in \mathbbm{2}$ is $0$ then $D_n F_n^{\gamma}=D_n$, while if it is $1$ then $D_n F_n^{\gamma} = N_n$. Therefore
\[
L_{\ell} \cdots L_1 =  (D_{\ell} + N_{\ell})  \cdots (D_1 + N_1) = \sum_{\alpha\in \mathbbm{2}^{\ell}} D_{\ell} F_{\ell}^{\alpha_{\ell}}  \dots D_1 F_1^{\alpha_1}\; .
\]
Actually, all the products containing at least $d$ of the $N_n$'s vanish, so the above sum involves only the multi-indices $\alpha$ with $|\alpha|<d$, so
\begin{equation}
\label{des}
L_{\ell} \cdots L_1 =   \sum_{\substack{\alpha\in \mathbbm{2}^{\ell} \\ |\alpha|<d}} D_{\ell} F_{\ell}^{\alpha_{\ell}}  \cdots D_1 F_1^{\alpha_1}\; .
\end{equation}
Similarly,
\begin{equation}
\label{sin}
L_1^{-1} \cdots L_{\ell}^{-1} = \sum_{\substack{\beta\in \mathbbm{2}^{\ell} \\ |\beta|<d}} D_1^{-1} \tilde{F}_1^{\beta_1}  \cdots D_{\ell}^{-1} \tilde{F}_{\ell}^{\beta_{\ell}}\;.
\end{equation}
Formulas (\ref{des}) and (\ref{sin}) imply the identity
\begin{eqnarray*}
\mathscr{A}_{L_{\ell} \cdots L_1} p = \sum_{\substack{\alpha \in \mathbbm{2}^{\ell}, \\ |\alpha|<d}} D_{\ell}  F_{\ell}^{\alpha_{\ell}}  \cdots D_1 F_1^{\alpha_1} \, p \circ \Bigl(  \sum_{\substack{\beta \in \mathbbm{2}^{\ell}, \\ |\beta|<d}} D_1^{-1} \tilde{F}_1^{\beta_1}  \cdots D_{\ell}^{-1} \tilde{F}_{\ell}^{\beta_{\ell}} \Bigr) \\
=  \sum_{\substack{\alpha \in \mathbbm{2}^{\ell}, \\ |\alpha|<d}} D_{\ell}  F_{\ell}^{\alpha_{\ell}}  \cdots D_1 F_1^{\alpha_1} \, p \Bigl[  \sum_{\substack{\beta^1 \in \mathbbm{2}^{\ell}, \\ |\beta^1|<d}} D_1^{-1} \tilde{F}_1^{\beta^1_1}  \cdots D_{\ell}^{-1} \tilde{F}_{\ell}^{\beta^1_{\ell}}, \dots,  \sum_{\substack{\beta^k \in \mathbbm{2}^{\ell}, \\ |\beta^k|<d}} D_1^{-1} \tilde{F}_1^{\beta^k_1}  \cdots D_{\ell}^{-1} \tilde{F}_{\ell}^{\beta^k_{\ell}} \Bigr].
\end{eqnarray*}
By $k$-linearity the latter expression can be rewritten as (\ref{svil}).
\end{proof}

The following lemma is our main estimate for conjugacy operators induced by triangular automorphisms:

\begin{lem}
\label{quoz}
Let $L_n$, $1\leq n \leq {\ell}$, be lower triangular linear automorphisms of $\C^d$. 
Assume that the vector of diagonal entries of $L_n$  is $(\lambda_n(1),\dots,\lambda_n(d))$. Then 
\[
\left\|\mathscr{A}_{L_{\ell} \dots L_1}\right\|_{L(\mathscr{H}^k/\mathscr{T}^k)} \leq K \,\ell^{N}  \max_{(n_r)} \prod_{r=0}^N  \left(\Bigl(
\max_{1\leq h \leq d} |\lambda_{n_{r+1},n_r}(h)| \Bigr)^{k-1} \max_{1\leq i \leq j \leq d} \frac{|\lambda_{n_{r+1},n_r}(j)|}{|\lambda_{n_{r+1},n_r}(i)|} \right)  \; ,
\]
where
\[
K=K\left(d,k,\max_{1\leq n\leq \ell} \|L_n\|,\max_{1\leq n \leq \ell} \|L_n^{-1}\| \right)\; , \quad N:=(k+1)(d-1)\; ,
\]
and the first maximum is taken over all the partitions $1=n_0\leq n_1 \leq \dots \leq n_N \leq n_{N+1} = \ell$.
\end{lem}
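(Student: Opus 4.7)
My plan is to bound each summand in the expansion (\ref{svil}) separately and then sum over all admissible multi-indices. A typical summand is indexed by a tuple $(\alpha,\beta^1,\ldots,\beta^k)\in(\mathbbm{2}^\ell)^{k+1}$ with $|\alpha|<d$ and $|\beta^j|<d$ for every $j$; it is the product of an outer operator $D_\ell F_\ell^{\alpha_\ell}\cdots D_1 F_1^{\alpha_1}$ applied to a polynomial obtained by composing $p$ with $k$ inner arguments $A^j:=D_1^{-1}\tilde F_1^{\beta^j_1}\cdots D_\ell^{-1}\tilde F_\ell^{\beta^j_\ell}$. Call a position $n\in\{1,\ldots,\ell\}$ a \emph{super-event} for the fixed summand if at least one of the $k+1$ entries $\alpha_n,\beta^1_n,\ldots,\beta^k_n$ equals $1$. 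Since $|\alpha|+|\beta^1|+\cdots+|\beta^k|<(k+1)d$, the number of super-events is at most $N=(k+1)(d-1)$, matching the exponent in the desired bound.

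For a fixed summand, list its super-events as $s_1<\cdots<s_P$ with $P\le N$, and set $s_0:=0$, $s_{P+1}:=\ell+1$. On each clean interval $(s_r,s_{r+1})$ the outer operator and all $k$ inner arguments reduce, position by position, to pure diagonal factors; the composition of all these diagonals acts on $p$ exactly as the conjugacy $\mathscr{A}_{D_{[s_r+1,s_{r+1}-1]}}$, whose diagonal entries are the products $\lambda_{s_{r+1},s_r+1}(j)$. Its quotient norm on $\mathscr{H}^k/\mathscr{T}^k$ is then controlled by the scalar estimate (\ref{nonmon}), producing exactly the $r$-th factor of the product in the statement. At each super-event position $s_r$, the contribution in each of the $k+1$ slots is either a diagonal factor $D_{s_r}^{\pm 1}$ or a strictly triangular one ($N_{s_r}$ outside or $\tilde N_{s_r}$ inside), and these have norm uniformly dominated by a constant depending only on $d$, $\max_n\|L_n\|$ and $\max_n\|L_n^{-1}\|$ by (\ref{mons1})--(\ref{mons2}). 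Since $\mathscr{T}^k$ is invariant under conjugacy by every lower triangular automorphism, the quotient norm is sub-multiplicative under the chain of compositions, so combining per-interval bounds with per-event bounded factors yields
\[
\|T_{\alpha,\beta^1,\ldots,\beta^k}\|_{L(\mathscr{H}^k/\mathscr{T}^k)}\le K_1\prod_{r=0}^{P}\left[\Bigl(\max_{h}|\lambda_{s_{r+1},s_r+1}(h)|\Bigr)^{k-1}\max_{i\le j}\frac{|\lambda_{s_{r+1},s_r+1}(j)|}{|\lambda_{s_{r+1},s_r+1}(i)|}\right],
\]
for a constant $K_1$ of the required form. Padding by inserting $N-P$ trivial intervals (which contribute factors equal to $1$) yields a partition of the form $1=n_0\le\cdots\le n_{N+1}=\ell$ prescribed by the statement.

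Finally, I sum over summands: the number of tuples $(\alpha,\beta^1,\ldots,\beta^k)$ whose super-event set is a fixed subset of $\{1,\ldots,\ell\}$ of size at most $N$ is bounded by a combinatorial constant depending only on $d$ and $k$, while the number of such subsets is $O(\ell^N)$. Hence summing the per-summand bound and dominating by the maximum over all valid partitions gives the lemma. The main obstacle I anticipate is the interval-wise reduction in the second step: verifying that on each clean interval the contributions from the outer operator and from the $k$ inner arguments---which sit in different slots relative to the multilinear form $p$---really combine into an honest diagonal conjugacy $\mathscr{A}_D$. This forces the super-event set to be defined as the \emph{union} of events across all of $\alpha,\beta^1,\ldots,\beta^k$ (rather than one set per slot), so that every slot is simultaneously purely diagonal on the same intervals; only then can the diagonal estimate (\ref{nonmon}) be applied directly.
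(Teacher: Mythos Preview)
Your proposal is correct and follows essentially the same route as the paper's proof: both decompose each summand in (\ref{svil}) into blocks of pure diagonal conjugacies $\mathscr{A}_{D_{n_{r+1},n_r}}$ separated by at most $N=(k+1)(d-1)$ ``events'' (your super-events are exactly the positions where the paper's operators $\mathscr{L}_n$ or $\mathscr{R}_n$ are non-trivial), bound the diagonal blocks on the quotient via (\ref{nonmon}), absorb the events into a constant using (\ref{mons1})--(\ref{mons2}) and the $\mathscr{T}^k$-invariance, and finally sum over the $O(\ell^N)$ admissible tuples. The only cosmetic difference is that the paper keeps the diagonal factor $\mathscr{A}_{D_{s_r}}$ at each event position inside a neighboring conjugacy block, whereas you absorb it into the event constant $K_1$; either bookkeeping yields the stated bound after padding to exactly $N+1$ intervals.
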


\begin{proof}
Fix some $\alpha,\beta^1,\dots,\beta^k \in \mathbbm{2}^{\ell}$ such that $|\alpha|<d$, $|\beta^1|<d$, \dots, $|\beta^k|<d$. The operator
\begin{equation}
\label{operator}
p \mapsto D_{\ell}  F_{\ell}^{\alpha_{\ell}}  \cdots D_1 F_1^{\alpha_1} \, p \left[ D_1^{-1} \tilde{F}_1^{\beta^1_1}  \cdots D_{\ell}^{-1} \tilde{F}_{\ell}^{\beta^1_{\ell}}, \dots, D_1^{-1} \tilde{F}_1^{\beta^k_1}  \cdots D_{\ell}^{-1} \tilde{F}_{\ell}^{\beta^k_{\ell}} \right]
\end{equation}
appearing in (\ref{svil}) can be seen as the composition of conjugacy operators $\mathscr{A}_{D_n}$, $1\leq n \leq \ell$, alternated with the left-multiplication operators
\[
\mathscr{L}_n \, p := F_n^{\alpha_n} p\; , \quad 1\leq n \leq \ell \; ,
\]
and the right-multiplication operators
\[
\mathscr{R}_n \, p := p \bigl[ \tilde{F}_n^{\beta_n^1}, \dots, \tilde{F}_n^{\beta_n^k} \bigr]\; , \quad 1\leq n \leq \ell\; .
\]
Each of these operators preserves the subspace $\mathscr{T}^k$, and by (\ref{mons1}) and (\ref{mons2}),
\begin{equation}
\label{lr}
\begin{split}
\|\mathscr{L}_n\|_{L(\mathscr{H}^k/\mathscr{T}^k)} &\leq \|F_n^{\alpha_n} \| \leq
\|F_n \|^{\alpha_n} \leq \rho^{\alpha_n}\; ,  \\ 
\|\mathscr{R}_n \|_{L(\mathscr{H}^k/\mathscr{T}^k)} &\leq
\|\tilde{F}_n^{\beta^1_n}\|\dots  \|\tilde{F}_n^{\beta^k_n}\| \leq \|\tilde{F}_n \|^{\beta^1_n} \dots  \|\tilde{F}_n \| ^{\beta^k_n} \leq \rho^{\beta_n^1 + \dots + \beta_n^k}\; , 
\end{split}
\end{equation}
where 
\[
\rho:= \max_{1\leq n \leq {\ell}} \bigl( \|L_n\|  \|L_n^{-1}
\|  + 1\bigr)\; .
\]
Since $|\alpha|<d$, the number of indices $n$ for which $\mathscr{L}_n$ is not the identity is at most $d-1$. Similarly, since $|\beta^j|<d$ for every $j=1,\dots,k$, the number of indices $n$ for which $\mathscr{R}_n$ is not the identity is at most $k(d-1)$. It follows that there are natural numbers
\[
1=n_0 \leq n_1 \leq \dots \leq n_{N+1} = \ell
\]
where $N=(k+1)(d-1)$, such that the operator (\ref{operator}) is the composition of the $N+1$ conjugacy operators
\[
\mathscr{A}_{D_{n_{j+1},n_j}}\; ,\qquad j=0,\dots, N\; ,
\]
with some of the operators $\mathscr{L}_n$ and $\mathscr{R}_n$. Using also (\ref{lr}), we deduce that the norm of the operator (\ref{operator}) on $\mathscr{H}^k/\mathscr{T}^k$ is not larger than the number
\begin{equation}
\label{bn}
\begin{split}
\prod_{j=0}^{N} \|\mathscr{A}_{D_{n_{j+1},n_j}}\|_{L(\mathscr{H}^k/\mathscr{T}^k)} \cdot \prod_{n=1}^{\ell} \rho^{\alpha_n} \cdot  \prod_{n=1}^{\ell} \rho^{\beta_n^1 + \dots + \beta_n^k} \\ = \rho^{|\alpha| + |\beta^1| + \dots + |\beta^k|} \prod_{j=0}^{N} \|\mathscr{A}_{D_{n_{j+1},n_j}}\|_{L(\mathscr{H}^k/\mathscr{T}^k)}\; .
\end{split} \end{equation}
Notice that
\begin{equation}
\label{combi}
\sum_{\substack{\alpha \in \mathbbm{2}^{\ell}, \\ |\alpha|<d}} \sum_{\substack{\beta^1 \in \mathbbm{2}^{\ell}, \\ |\beta^1|<d}} \dots \sum_{\substack{\beta^k \in \mathbbm{2}^{\ell}, \\ |\beta^k|<d}} \rho^{|\alpha| + |\beta^1| + \dots + |\beta^k|}    =  \left( \sum_{\substack{\alpha \in \mathbbm{2}^{\ell} \\ |\alpha|<d}}
\rho^{|\alpha|} \right)^{k+1} 
\end{equation}
and
\[
\sum_{\substack{\alpha \in \mathbbm{2}^{\ell} \\ |\alpha|<d}}
\rho^{|\alpha|} =
\sum_{j=0}^{d-1} \binom{\ell}{j}
    \rho^{j}  \leq \rho^{d-1} \sum_{j=0}^{d-1} \binom{\ell}{j}  \leq
    \rho^{d-1} \sum_{j=0}^{d-1} \frac{\ell^j}{j!}  \leq e \rho^{d-1} \ell^{d-1}\; ,
\]
so the quantity (\ref{combi}) is at most $e^{k+1} \rho^N \ell^N$.
Since the operator $\mathscr{A}_{L_{\ell}\cdots  L_1}$ is the sum of the operators (\ref{operator}) over all multi-indices $\alpha,\beta^1,\dots,\beta^k$ in $\mathbbm{2}^{\ell}$ with weight less than $d$, this bound on (\ref{combi}) and the fact that the norm of (\ref{operator}) is at most (\ref{bn}) imply the estimate 
\[
\|\mathscr{A}_{L_{\ell} \dots L_1}\|_{L(\mathscr{H}^k/\mathscr{T}^k)} \leq e^{k+1} \, \rho^{N} \, \ell^{N} \max_{1=n_0 \leq n_1\leq \dots \leq n_N \leq n_{N+1} = \ell} \; \prod_{j=0}^N \left\| \mathscr{A}_{D_{n_{j+1},n_j}} \right\| \, .
\]
The conclusion now follows from the estimate (\ref{nonmon}). 
\end{proof}

\section{The formal non-autonomous conjugacy}
\label{sfac}

Let $\mathscr{F} \subset \C[[z_1,\dots,z_d]]^d$ be the space of formal series in $d$ variables and with $d$ components, with vanishing zero order term. If $f\in \mathscr{F}$ and $k\in \N$, we denote by $f^k$ the $k$-homogeneous part of $f$. Therefore each $f\in \mathscr{F}$ can be written uniquely as
\begin{equation}
\label{expr}
f = \sum_{k=1}^{\infty} f^k\; , 
\end{equation}
where $f^k\in \mathscr{H}^k$ for every $k\in \N_+$, and any expression of the form (\ref{expr}) defines an element of $\mathscr{F}$. The formal composition of two formal series $h,f\in \mathscr{F}$ is well-defined, and its $k$-th homogeneous part is given by the finite sum
\begin{equation}
\label{composizione}
(h \circ f)^k = \sum_{\substack{j\geq 1\\ \alpha\in \N_+^j \\ |\alpha| = k}} h^j[f^{\alpha_1},\dots,f^{\alpha_j}]\; .
\end{equation}
An element $f\in \mathscr{F}$ is invertible with respect to the formal composition if and only if its first order term $f^1$ is an invertible linear mapping. 

A sequence $(f_n)\subset \mathscr{F}$ is said to be \emph{subexponential} if for every $k\in \N_+$ the sequence $(f^k_n)\subset \mathscr{H}^k$ is subexponential. Clearly, a bounded sequence in $\mathscr{F}$ with its standard structure of topological vector space, that is a sequence $(f_n)$ such that $(f_n^k)$ is bounded for every $k$, is subexponential.
The aim of this section is to prove the following: 

\begin{thm}
\label{formal}
Let $(f_n)$ be a subexponential sequence in $\mathscr{F}$ such that  for every $n\in \N$ the linear endomorphism $L_n := f_n^1$ is invertible, lower triangular, and satisfies the uniform bounds
\begin{equation}
\label{stime}
\|L_{n,m}\|  \leq c \lambda^{n-m} \; , \quad \|L_{n,m}^{-1}\|  \leq c \mu^{n-m}  \;, \qquad \forall n\geq m \geq 0\; ,
\end{equation}
where $c>0$ and $0<\lambda<1< \mu$.
Let $m_0\in \N_+$ be such that
\begin{equation}
\label{lambdamu}
\lambda^{m_0+1} \mu < 1\; .
\end{equation}
We assume that the vector  $(\lambda_n (1),\lambda_n(2),\dots,\lambda_n(d))$ of diagonal entries of $L_n$ satisfies the following condition: For every $\theta>1$ there exists a positive number $C(\theta)$ such that 
\begin{equation}
\label{ord}
\max_{1\leq h \leq d} |\lambda_{n+\ell,n}(h)| \max_{1\leq i \leq j \leq d} \frac{|\lambda_{n+\ell,n}(j)|}{|\lambda_{n+\ell,n}(i)|} \leq C(\theta) \theta^n \lambda^{\ell}\; ,
\end{equation}
for every $n,\ell \in \N$.
Then there exist a subexponential sequence $(g_n)$ of  special triangular automorphisms of $\C^d$ of degree at most $m_0$ and a subexponential sequence $(h_n)$ in $\mathscr{F}$ such that  $g_n^1=L_n$, $h_n^1=I$, and
\begin{equation}
\label{conju}
h_{n+1} \circ f_n = g_n \circ h_n \; , \qquad \forall n\in \N\; .
\end{equation}
\end{thm}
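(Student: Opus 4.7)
The plan is to build the degree-$k$ components $h_n^k$ and $g_n^k$ by induction on $k$, setting $h_n^1 := I$ and $g_n^1 := L_n$ as the base case and splitting the induction into a ``resonant'' regime $2 \le k \le m_0$ (where I will use Lemma~\ref{Due} with $V = \mathscr{T}^k$) and a ``hyperbolic'' regime $k \ge m_0 + 1$ (where I will use Lemma~\ref{Uno} and simply set $g_n^k := 0$). Assuming inductively that subexponential sequences $(h_n^j)$ and $(g_n^j) \in \mathscr{T}^j$ (with $g_n^j = 0$ for $j > m_0$) have been constructed for $1 \le j \le k-1$, the composition formula (\ref{composizione}) applied to (\ref{conju}) isolates the two top-degree summands and yields
$$h_{n+1}^k \circ L_n - L_n h_n^k - g_n^k = R_n^k,$$
where $R_n^k \in \mathscr{H}^k$ is a finite sum of multilinear expressions in already-constructed data and is therefore subexponential in $n$. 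Composing with $L_n^{-1}$ on the right rewrites this as the linear recurrence
$$h_{n+1}^k = \mathscr{A}_{L_n}^{-1} h_n^k + R_n^k \circ L_n^{-1} + g_n^k \circ L_n^{-1};$$
the operator $A_n := \mathscr{A}_{L_n}^{-1}$ is uniformly bounded on $\mathscr{H}^k$ by (\ref{stime}) and preserves $\mathscr{T}^k$ since $L_n^{-1}$ is lower triangular, while $g_n^k \mapsto g_n^k \circ L_n^{-1}$ is a bijection of $\mathscr{T}^k$ onto itself.

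In the resonant regime $2 \le k \le m_0$ I would apply Lemma~\ref{Due} with $E = \mathscr{H}^k$, $V = \mathscr{T}^k$, forcing $b_n := R_n^k \circ L_n^{-1}$, and control $v_n := g_n^k \circ L_n^{-1}$. The only non-trivial hypothesis to verify is the decay (\ref{unobis}), which via the contravariance $\mathscr{A}_{LM} = \mathscr{A}_M \mathscr{A}_L$ reduces to bounding $\|\mathscr{A}_{L_{n+\ell,n}}\|_{L(\mathscr{H}^k/\mathscr{T}^k)}$. Lemma~\ref{quoz}, applied to the time-shifted block $L_n,\dots,L_{n+\ell-1}$, gives a bound of the form $K \ell^N \max_{(n_r)} \prod_{r=0}^N \Phi_r^k$, where $\Phi_r^k := (\max_h |\lambda_{n_{r+1},n_r}(h)|)^{k-1} \max_{i \le j} |\lambda_{n_{r+1},n_r}(j)/\lambda_{n_{r+1},n_r}(i)|$. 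To exploit (\ref{ord}) I would split off a single factor of $\max_h|\lambda|$, estimate the remaining $(k-2)$-th power by the uniform bound $\max_h |\lambda_{n_{r+1},n_r}(h)| \le \|L_{n_{r+1},n_r}\| \le c\lambda^{n_{r+1}-n_r}$ from (\ref{stime}), and apply (\ref{ord}) to the piece that remains; this yields $\Phi_r^k \le c^{k-2} C(\theta_1) \theta_1^{n_r} \lambda^{(k-1)(n_{r+1}-n_r)}$. Multiplying over the $N+1$ partition intervals and using $\sum_r n_r \le (N+1)(n+\ell)$ and $\sum_r(n_{r+1}-n_r) = \ell$, then absorbing the polynomial factor $\ell^N$ into a slightly larger decay rate, gives exactly the form $C(\theta)\theta^n \alpha(\theta)^\ell$ required by (\ref{unobis}): given any $\theta > 1$, it suffices to choose $\theta_1 \in (1, \min(\theta^{1/(N+1)}, \lambda^{-(k-1)/(N+1)}))$, which is non-empty because $\lambda < 1$. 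Lemma~\ref{Due} then produces the desired $(h_n^k)$ together with $(v_n) \subset \mathscr{T}^k$, and I recover $g_n^k := v_n \circ L_n \in \mathscr{T}^k$, which is subexponential because $\|L_n\|$ is bounded by (\ref{stime}).

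In the hyperbolic regime $k \ge m_0 + 1$ I would take $g_n^k := 0$ and apply Lemma~\ref{Uno} on the full space $\mathscr{H}^k$; the required estimate follows directly from submultiplicativity and (\ref{stime}): $\|\mathscr{A}_{L_{n+\ell,n}}\|_{L(\mathscr{H}^k)} \le \|L_{n+\ell,n}^{-1}\| \|L_{n+\ell,n}\|^k \le c^{k+1}(\mu \lambda^{m_0+1})^\ell$, which decays geometrically by (\ref{lambdamu}) uniformly in $\theta$. Assembling $h_n := I + \sum_{k \ge 2} h_n^k$ and $g_n := L_n + \sum_{k=2}^{m_0} g_n^k$, both sequences are subexponential in $\mathscr{F}$ by definition, $g_n$ has the prescribed special triangular form of degree at most $m_0$, and (\ref{conju}) holds degree by degree. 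The principal technical obstacle lies in the resonant regime: the raw bound from Lemma~\ref{quoz} carries $(\max_h|\lambda|)^{k-1}$ rather than the single power controlled by (\ref{ord}), and if one did not pre-process by peeling off $(k-2)$ factors via (\ref{stime}) the resulting rate would fail to be $< 1$ for $\theta > \lambda^{-(k-1)}$, obstructing the application of Lemma~\ref{Due}.
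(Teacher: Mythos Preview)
Your proposal is correct and follows essentially the same approach as the paper: induction on the degree $k$, with Lemma~\ref{Uno} on $\mathscr{H}^k$ in the regime $k>m_0$ and Lemma~\ref{Due} with control space $\mathscr{T}^k$ in the regime $2\le k\le m_0$, the key input in the latter case being the quotient-norm estimate of Lemma~\ref{quoz} combined with~(\ref{ord}). The only cosmetic difference is in how the extra $(k-2)$ powers of $\max_h|\lambda_{n_{r+1},n_r}(h)|$ are handled: the paper simply bounds them by $1$ via $|\lambda_{n_{r+1},n_r}(h)|\le\lambda^{n_{r+1}-n_r}\le 1$, whereas you retain the factor $\lambda^{(k-2)(n_{r+1}-n_r)}$, which is sharper but unnecessary for the conclusion.
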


\begin{proof}
By the identity (\ref{composizione}), the conjugacy equation (\ref{conju}) is equivalent to the infinite system of algebraic equations
\begin{equation}
\label{coeff}
h_{n+1}^k[f_n^1]^k + \sum_{\substack{1\leq j <k \\ \alpha\in \N_+^j \\ |\alpha|=k}} h_{n+1}^j[f_n^{\alpha_1},\dots,f_n^{\alpha_j}] = g^1_n[h_n^k] + \sum_{\substack{1< j \leq k \\ \alpha\in \N_+^j \\ |\alpha|=k}}   
g_n^j [h_n^{\alpha_1},\dots,h_n^{\alpha_j}] \;,
\end{equation}
for $k\in \N_+$ and $n\in \N$.
If we set
\begin{equation}
\label{ini}
g_n^1 := f_n^1 = L_n\; , \quad h_n^1 := I\; , \qquad \forall n\in \N \;, 
\end{equation}
the equations (\ref{coeff}) are trivially satisfied when $k=1$, for any $n\in \N$. We have to prove that for every $k\geq 2$ there are subexponential sequences $(h_n^k)$ and $(g_n^k)$ in $\mathscr{H}^k$ solving the equations (\ref{coeff}), with $g_n^k$ in the space of strictly triangular $k$-homogeneous polynomial maps $\mathscr{T}^k$ for $2\leq k \leq m_0$, and $g_n^k=0$ for $k>m_0$.

Let $k\geq 2$. Using (\ref{ini}), right-composing by $L_n^{-1}$, and isolating the terms with $j=1$ and $j=k$ in the two sums, we can rewrite equation (\ref{coeff}) as
\[
h_{n+1}^k = L_n h_n^k \circ L_n^{-1} + g^k_n \circ L_n^{-1} - f_n^k \circ L_n^{-1} +  
\sum_{\substack{1< j < k \\ \alpha\in \N_+^j \\ |\alpha|=k}} \bigl(   
g_n^j [h_n^{\alpha_1},\dots,h_n^{\alpha_j}] -  h_{n+1}^j[f_n^{\alpha_1},\dots,f_n^{\alpha_j}]\bigr)  \circ L_n^{-1}\; .
\]
This equation has the form
\begin{equation}
\label{coeff3}
h_{n+1}^k = \mathscr{A}_{L_n^{-1}} h_n^k + g_n^k \circ L_n^{-1} + b_n^k\; , 
\end{equation}
where $\mathscr{A}_{L_n^{-1}} : \mathscr{H}^k \rightarrow \mathscr{H}^k$ is the conjugacy operator by $L_n^{-1}$ defined in (\ref{conop}),
and $b_n^k$ is of the form
\begin{equation}
\label{forma}
b_n^k = B^k \bigl(f_n^1,\dots,f_n^k;g_n^2,\dots,g_n^{k-1};h_n^1, \dots,h_n^{k-1};h_{n+1}^2,\dots,h_{n+1}^{k-1}\bigr)\;.
\end{equation}
In particular, $b_n^k$ does not depend on the sequences $(g^j_n)_{n\in \N}$ and $(h^j_n)_{n\in\N}$ for $j\geq k$.

Let $m \geq 2$ be an integer. Arguing inductively on $m$, we assume that for every $k< m$ the equations (\ref{coeff}), equivalently the equations (\ref{coeff3}), are satisfied by subexponential sequences $(g_n^k)$ and $(h_n^k)$, such that (\ref{ini}) holds and
\begin{equation}
\label{progi}
g_n^k \in \mathscr{T}^k \mbox{ for } 2\leq k < \min\{m,m_0+1\}\;, \quad g_n^k = 0 \mbox{ for } m_0 < k < m\;, \qquad \forall n\in \N\;.
\end{equation}
We want to prove that the equations 
\begin{equation}
\label{coeffm}
h_{n+1}^m = \mathscr{A}_{L_n^{-1}} h_n^m + g_n^m \circ L_n^{-1} + b_n^m\; , 
\end{equation}
are satisfied by subexponential sequences $(g_n^m)$, $(h_n^m)$, such that 
\begin{equation}
\label{progi2}
g_n^m \in \mathscr{T}^m\;, \quad g_n^m = 0 \mbox{ if } m>m_0\;, 
\quad \forall n\in \N\;.
\end{equation}
Our strategy is to apply Lemma \ref{Uno} when $m>m_0$, when we are forced to take $g_n^m=0$, and to apply Lemma \ref{Due} when $m\leq m_0$, by using the space of strictly triangular homogeneous polynomials $\mathscr{T}^m$ as a control space.  

By the assumption that $(f_n)$ is subexponential and by the inductive hypothesis, (\ref{forma}) shows that the sequence $(b^m_n)$ is subexponential. In order to estimate the operator norm of the linear part of the equation, we distinguish the cases $m>m_0$ and $2\leq m\leq m_0$.

\paragraph{The case $\mathbf{m>m_0}$.} By (\ref{stime}), the norm of the operator $\mathscr{A}_{L_{n+\ell,n}}$ on $\mathscr{H}^m$ is bounded by
\[
\|\mathscr{A}_{L_{n+\ell,n}}\|_{L(\mathscr{H}^m)} \leq \|L_{n+\ell,n}^{-1} \| \| L_{n+\ell,n}\|^m 
\leq c^{m+1} \left( \mu \lambda^m\right)^{\ell}\; , \qquad \forall n,\ell \in \N\; .
\]
Since $m\geq m_0+1$ and $\lambda<1$, by (\ref{lambdamu}) the number $\mu\lambda^m$ is smaller than 1, so the operators $A_n=\mathscr{A}_{L_n^{-1}} = \mathscr{A}_{L_n}^{-1}$ satisfy the assumption (\ref{uno}) of Lemma \ref{Uno}.
This lemma implies that the equation (\ref{coeffm}) with $g^m_n=0$ for every $n\in \N$ has a unique subexponential solution $(h_n^m)$. 

\paragraph{The case $\mathbf{2\leq m \leq m_0}$.}  By Lemma \ref{quoz}, the norm of the operator $\mathscr{A}_{L_{n+\ell,n}}$ on the quotient space $\mathscr{H}^m/\mathscr{T}^m$ is bounded by
\[
\begin{split}
\|\mathcal{A}_{L_{n+\ell,n}}\|_{L(\mathscr{H}^m/\mathscr{T}^m)} &\leq K \,\ell^{N}  \max_{(n_r)} \; \prod_{r=0}^N \left( \Bigl(
\max_{1\leq h \leq d} |\lambda_{n_{r+1},n_r}(h)| \Bigr)^{m-1} \max_{1\leq i \leq j \leq d} \frac{|\lambda_{n_{r+1},n_r}(j)|}{|\lambda_{n_{r+1},n_r}(i)|} \right) \\ 
& \leq K \,\ell^{N}  \max_{(n_r)} \; \prod_{r=0}^N \left( 
\max_{1\leq h \leq d} |\lambda_{n_{r+1},n_r}(h)| \max_{1\leq i \leq j \leq d} \frac{|\lambda_{n_{r+1},n_r}(j)|}{|\lambda_{n_{r+1},n_r}(i)|} \right) \;,
\end{split} 
\]
where $N=(m+1)(d-1)$, $K=K(d,m,c\lambda,c\mu)$, and the first maximum is taken over all the partitions $n=n_0\leq n_1 \leq \dots \leq n_N \leq n_{N+1} = n+\ell-1$. 
In the last inequality we have used that $|\lambda_{n_{r+1},n_r}(h)|\leq \lambda^{n_{r+1}-n_r} \leq 1$, by (\ref{stime}), and $m\geq 2$.
Let $\theta$ be any number greater than $1$ and let $C(\theta)$ and $\alpha(\theta)<1$ be positive numbers such that (\ref{ord}) holds. Plugging (\ref{ord}) into the last inequality, we get
\begin{equation}
\label{sstt}
\begin{split}
\|\mathcal{A}_{L_{n+\ell,n}}\|_{L(\mathscr{H}^m/\mathscr{T}^m)} &\leq K \,\ell^{N}  \max_{(n_r)} \; \prod_{r=0}^N C(\theta) \theta^{n_r} \lambda^{n_{r+1} - n_r} \\ &= K \ell^N C(\theta)^{N+1} \theta^{n+N(n+\ell-1)} \lambda^{\ell} \\ &= K \theta^{-N} \ell^N C(\theta)^{N+1} \theta^{(N+1)n} \left(\theta^N \lambda \right)^{\ell}\; .
\end{split}
\end{equation}
Now let $\omega>1$. Choose $\theta>1$ such that $\theta^{N+1}\leq \omega$ and $\theta^N \lambda<1$. Then for every number $\alpha$ such that $\theta^N \lambda<\alpha<1$ there exists a constant $\tilde{C}$ such that
\[
\ell^N \theta^{(N+1)n} (\theta^N \lambda)^{\ell} \leq \tilde{C} \omega^n \alpha^{\ell}\; , \qquad \forall n,\ell\in \N\; .
\]
Together with the above estimate, (\ref{sstt}) shows that $A_n=\mathscr{A}_{L_n^{-1}}$ satisfies the assumption (\ref{unobis}) of Lemma \ref{Due}, which then implies the existence of a subexponential sequence $(\hat{g}_n)$ in $\mathscr{T}^m$ such that the equation
\[
h_{n+1}^m = \mathscr{A}_{L_n^{-1}} h_n^m + \hat{g}_n + b_n^m
\]
has a subexponential solution $(h_n^m)\subset \mathscr{H}^m$. Then (\ref{coeffm}) is solved by the subexponential sequences $(h_n^m)$ and 
\[
g_n^m := \hat{g}_n \circ L_n \in \mathscr{T}^m\; .
\]
This concludes the proof of Theorem \ref{formal}.
\end{proof}

\begin{rem} (Uniqueness)
\label{unique}
Once the sequence of special triangular automorphisms $(g_n)$ has been fixed, the sequence $(h_n)$ is uniquely determined by its first element $h_0$ thanks to (\ref{conju}). Moreover, the above proof shows that the homogeneous polynomials $h_0^k$ for $k> m_0$ are uniquely determined by the ones of degree at most $m_0$.
\end{rem}

\begin{rem}
\label{lin}
(The linearizable case)
If $\lambda^2 \mu<1$ we can take $m_0=1$ and the condition (\ref{ord}) is automatically fulfilled (up to the choice of a larger $\lambda<1$). In this case, the triangular automorphisms $g_n=L_n$ are linear and the subexponential formal conjugacy $(h_n)$ is unique and actually bounded.
\end{rem}

\begin{rem}
\label{indep}
As explained at the beginning of Section \ref{scoh}, the operator norm which appears in (\ref{stime}) is the one induced by the norm (\ref{norm}) on $\C^d$. However, the condition (\ref{stime}) does not depend on the choice of the norm on $\C^d$, up to the choice of a different constant $c$.
\end{rem} 

\begin{rem}
If we strengthen the assumptions of the above theorem by requiring that $(f_n)$ is bounded and that (\ref{ord}) holds with $\theta=1$, the same arguments imply that $(g_n)$ and $(h_n)$ are bounded. However, the weaker assumption that (\ref{ord}) holds  for every $\theta>1$ is more relevant for our purposes, as it holds generically when the $f_n$'s are induced by the restriction of a diffeomorphism $f$ to the stable manifolds along an orbit on a compact hyperbolic set (see the proof of Corollary \ref{ucurullariu} below).
\end{rem}

\section{A counterexample}

If we drop the assumption (\ref{ord}), the conclusion of Theorem \ref{formal} may fail. Let us exhibit a counterexample in dimension $d=2$. Let $(s_k)$  be a strictly increasing sequences of integers such that
\begin{eqnarray}
\label{ordine1}
8^{s_{2k}} &=& o(s_{2k+1}) \qquad \mbox{for } k\rightarrow \infty\;, \\
\label{ordine2}
s_{2k-1} &=& o(s_{2k}) \quad\qquad \mbox{for } k\rightarrow \infty\; . 
\end{eqnarray}
For instance, we may take $(s_k)$ defined recursively by
\[
s_0 = 1\;, \quad s_{k+1} = 10^{s_k}\;,
\]
that is, in Knuth's up-arrow notation, $s_k =  10\uparrow\uparrow k$.

Consider the sequence $(f_n) \subset \mathscr{F}$ defined by
\[
f_n (z_1,z_2) := \left\{ \begin{array}{ll} 
\Bigl( \frac{1}{4} z_1 - \frac{1}{4} z_2^2, \frac{1}{2} z_2 \Bigr) \;, & \mbox{if } s_{2k} \leq n < s_{2k+1}\; , \\ \Bigl( \frac{1}{2} z_1, \frac{1}{4} z_2 - \frac{1}{4} z_1^2 \Bigr)\;, & \mbox{if } s_{2k+1}\leq n < s_{2k+2} \; , \end{array} \right.
\]
where $k$ varies in $\N$. Then the assumption (\ref{stime}) of Theorem \ref{formal} holds with the sharp constants $\lambda=1/2$, $\mu=4$. In particular, we can take $m_0=2$ in (\ref{lambdamu}), but not $m_0=1$, so we are not in the linearizable case of Remark \ref{lin}. 
On the other hand, the vector $(\lambda_n(1),\lambda_n(2))$ of diagonal entries of $f_n^1$ is either $(1/2,1/4)$ or $(1/4,1/2)$, so
\[
\max_{1\leq h \leq 2} |\lambda_{n+\ell,n}(h)| \max_{1\leq i \leq j \leq 2} \frac{|\lambda_{n+\ell,n}(j)|}{|\lambda_{n+\ell,n}(i)|} = (1/2)^{\ell} 2^{\ell} = 1\; ,
\]
and (\ref{ord}) does not hold.

We shall prove that if $(g_n)$ and $(h_n)$ are sequences in $\mathscr{F}$ such that $g_n^1=f_n^1$, $h_n^1=I$,
\[
h_{n+1} \circ f_n = g_n \circ h_n \quad \mbox{as 2-jets ,} \qquad \forall n\in \N\; , 
\]
and $g_n^2$ belongs to $\mathscr{T}^2$ for every $n\in \N$, then $(h_n^2)$ is not subexponential.     

Since $f_n^1$ is diagonal, the equation (\ref{coeff3}) splits into an equation for each element of the standard basis of $\mathscr{H}^2$. Since $g_n^2$ belongs to $\mathscr{T}^2$, its first component vanishes and the equation for the coefficient of $z_2^2$ in the first component of $h_n$ - call it $u_n$ - is just
\[
u_{n+1} =  \lambda_n(1) \lambda_n(2)^{-2}  u_n - \lambda_n(2)^{-2} a_n \; ,
\]
where $a_n$ is the coefficient of $z_2^2$ in the first component of $f_n$. By the definition of $f_n$, the sequence of complex numbers $u_n$ satisfies the recursive equation
\[
u_{n+1} = \left\{ \begin{array}{ll}  u_n + 1 , & \mbox{if } s_{2k}\leq n < s_{2k+1} \; , \\ 8 u_n , & \mbox{if }  s_{2k+1} \leq n < s_{2k+2} \; . \end{array} \right.
\]
The solutions $(u_n)$ of the above equation are uniquely determined by the choice of the first element $u_0\in \C$. We claim that for every choice of $u_0$, the sequence $(u_n)$ is not subexponential. 

By the bound (\ref{ub}), we have
\[
|u_n| \leq 8^n (|u_0|+1)\; .
\]
Therefore, by (\ref{ordine1}),
\[
|u_{s_{2k+1}}| = |u_{s_{2k}} + s_{2k+1} - s_{2k} | \geq s_{2k+1} - s_{2k} - |u_{s_{2k}}| \geq s_{2k+1} - s_{2k} - 8^{s_{2k}} (|u_0| + 1) = s_{2k+1} \bigl( 1 + o(1) \bigr) 
\]
diverges. Thus, if $k$ is large enough, we find by (\ref{ordine2})
\[
|u_{s_{2k}}| = 8^{s_{2k} - s_{2k-1}} |u_{s_{2k-1}}| \geq 8^{s_{2k} + o(s_{2k})} 
\; .
\]
Therefore, the inequality $|u_n| \geq 8^{n+o(n)}$ holds on a subsequence, hence the sequence $(u_n)$ is not subexponential and neither is $(h_n^2)$.

\begin{rem}
Since in this example $f_n^1$ is diagonal, it can be seen also as an upper triangular linear automorphisms, and one may hope to find a subexponential conjugacy with a sequence $(g_n)$ such that, denoting by $S$ the involution $(z_1,z_2)\mapsto (z_2,z_1)$, $S\circ g_n \circ S$ is a special triangular automorphism of $\C^2$. However, an argument similar to the one developed above shows that any conjugacy $(h_n)$ between $(f_n)$ and a $(g_n)$ of this form has a 2-homogeneous part which diverges exponentially.
\end{rem} 

\begin{rem}
One could show that the basin of attraction of 0 with respect to the above sequence $(f_n)$, that is the set
\[
\set{z\in \C^2}{ f_n  \circ \cdots \circ f_0 (z) \rightarrow 0 \mbox{ for } n\rightarrow \infty}\; ,
\]
is the whole $\C^2$. 
\end{rem}

\section{The abstract basin of attraction}
\label{saba}

Let $\mathscr{G}$ be the category whose objects are the sequences
\[
f = (f_n : U_n \rightarrow U_{n+1})_{n\in \N}
\]
of injective holomorphic  maps between $d$-dimensional complex manifolds and whose morphisms $h\colon f\rightarrow g$ are sequences of injective holomorphic maps 
\[
h = (h_n \colon   U_n \rightarrow V_n)_{n\in \N}\; , \quad \mbox{with } U_n = \dom f_n, \; V_n = \dom g_n\; ,
\]
such that for every $n\in \N$ the diagram
\begin{equation*}
\begin{CD}
U_n @>{f_n}>> U_{n+1} \\ @V{h_n}VV @VV{h_{n+1}}V \\ V_n @>{g_n}>> V_{n+1}
\end{CD}
\end{equation*}
commutes. In other words, $\mathscr{G}$ is the category of functors $\mathrm{Fun} (\N,\mathscr{M})$, where $\mathscr{M}$ is the category of $d$-dimensional complex manifolds and injective holomorphic maps. 

If $f$ is an object of $\mathscr{G}$ and $n\geq m\geq 0 $, we denote by $f_{n,m}$ the composition
\[
f_{n,m} = f_{n-1} \circ \dots \circ f_m \colon  U_m \rightarrow U_n\;, \quad f_{n,n} = \mathrm{id}_{U_n}\;,
\]
which satisfies, for any $n\geq m \geq \ell \geq 0$, 
\[
f_{n,m} \circ f_{m,\ell} = f_{n,\ell}\;.
\]

We denote by $W$ the inductive limit functor 
\[
\mathop{\mathrm{Lim}}_{\longrightarrow} \colon  \mathscr{G} \rightarrow \mathscr{M}\;.
\]
That is, $Wf$ is the topological inductive limit of the sequence of maps $(f_n)$ with the induced holomorphic structure: Constructively, $Wf$ is the quotient of the set
\[
\left\{ z\in \prod_{n\geq m} U_n \; \Big| \; m\in \N\;, \; z_{n+1} = f_n(z_n) \; \forall n\geq m\right\} 
\]  
by identifying $z$ and $z'$ if $z_n=z'_n$ for $n$ large enough. The holomorphic structure is induced by the open inclusions
\[
f_{\infty,m} \colon  U_m \hookrightarrow Wf\;, \quad z \mapsto [(f_{n,m}(z))_{n\geq m}]\;.
\] 
With the above representation, if $h\colon f\rightarrow g$ is a morphism in $\mathscr{G}$, $Wh$ is the injective holomorphic map 
\[
Wh ([(z_n)_{n\geq m}]) = [(h_n(z_n))_{n\geq m}]\;.
\]
The following result, whose proof is immediate, turns our to be useful in order to identify $Wf$:

\begin{lem}
\label{biholo}
Let $h\colon f\rightarrow g$ be a morphism in $\mathscr{G}$. Then $Wh\colon  Wf \rightarrow Wg$ is surjective (hence a biholomorphism) if and only if for every $m\in \N$ and every $z\in V_m$ there exists $n\geq m$ such that $g_{n,m}(z)\in h_n(U_n)$.
\end{lem}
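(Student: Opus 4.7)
\medskip

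\noindent\textbf{Proof plan for Lemma \ref{biholo}.} The plan is to verify the two directions separately, after first noting that injectivity of $Wh$ and the local biholomorphism property come for free from the assumptions on the morphism $h$ in $\mathscr{G}$.

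First I would observe that $Wh$ is automatically an injective local biholomorphism, so surjectivity is the only missing ingredient for it to be a biholomorphism. Injectivity follows from the definition of the equivalence on inductive limits: if $Wh([(z_k)_{k\ge m}])=Wh([(z'_k)_{k\ge m'}])$, then $h_k(z_k)=h_k(z'_k)$ for all $k$ large, and since each $h_k$ is injective we get $z_k=z'_k$ eventually, so the two classes coincide in $Wf$. The local biholomorphism property is equally formal: $Wh$ is holomorphic, its restriction to the chart $f_{\infty,m}(U_m)$ equals $g_{\infty,m}\circ h_m \circ f_{\infty,m}^{-1}$, and an injective holomorphic map between complex manifolds of the same dimension $d$ is a local (indeed global onto its image) biholomorphism.

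For the implication $(\Rightarrow)$, fix $m\in \N$ and $z\in V_m$. The element $g_{\infty,m}(z)=[(g_{n,m}(z))_{n\geq m}]\in Wg$ must admit a preimage under $Wh$, say $[(y_k)_{k\geq m'}]\in Wf$. By definition of the equivalence relation on $Wg$, the sequences $(h_k(y_k))_{k\geq m'}$ and $(g_{k,m}(z))_{k\geq m}$ coincide from some index $n\geq \max(m,m')$ on; in particular $g_{n,m}(z)=h_n(y_n)\in h_n(U_n)$, which is the required condition.

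For the converse $(\Leftarrow)$, every element of $Wg$ can be written as $g_{\infty,m}(z)$ for some $m\in\N$ and some $z\in V_m$, because the open inclusions $g_{\infty,m}$ cover $Wg$. By hypothesis there is $n\geq m$ and $u\in U_n$ with $g_{n,m}(z)=h_n(u)$. Consider the class $[(f_{k,n}(u))_{k\geq n}]\in Wf$. Applying $Wh$ and using the commutation $h_k\circ f_{k,n}=g_{k,n}\circ h_n$ (obtained by iterating the defining square of the morphism $h$), I get
\[
Wh\bigl([(f_{k,n}(u))_{k\geq n}]\bigr)=\bigl[\bigl(g_{k,n}(h_n(u))\bigr)_{k\geq n}\bigr]=\bigl[\bigl(g_{k,n}(g_{n,m}(z))\bigr)_{k\geq n}\bigr]=\bigl[(g_{k,m}(z))_{k\geq n}\bigr]=g_{\infty,m}(z),
\]
proving surjectivity. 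The argument is almost entirely formal; the only delicate point is to use the commutation relation iteratively (i.e., to unfold it into $h_k\circ f_{k,n}=g_{k,n}\circ h_n$ for all $k\geq n$) and to invoke the eventual-coincidence equivalence at the right moment, which explains why the author regards the proof as immediate.
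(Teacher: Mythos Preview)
Your proof is correct and is exactly the routine verification the paper has in mind when it calls the result immediate; the paper gives no argument beyond that remark, and your unfolding of both directions via the eventual-coincidence equivalence and the iterated commutation $h_k\circ f_{k,n}=g_{k,n}\circ h_n$ is the natural way to fill in the details.
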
 

Let $B=B_1$ be the open unit ball about 0 in $\C^d$ and 
consider a sequence of injective holomorphic maps $f_n \colon  B \rightarrow B$ such that
\begin{equation}
\label{contra}
|f_n(z)| \leq \lambda |z| \;, \qquad \forall z\in B\;, \; \forall n\in \N\;, 
\end{equation}
for some $\lambda<1$. In this case, the manifold $Wf$ may be considered as the {\em abstract basin of attraction of $0$} with respect to the sequence $f=(f_n)$. In fact, if in addiction the maps $f_n$ are restrictions of global automorphisms $g_n$ of $\C^d$, then $g_{\infty,n}\colon \C^d\to Wg$ are biholomorphisms and, in particular, $Wg$ can be identified with $\C^d$; through this identification, the induced holomorphic inclusion $Wf \hookrightarrow Wg\cong \C^d$ is the inclusion in $\C^d$ of the basin of attraction of 0 with respect to $g$, which is the open set
\[
 \set{z\in \C^d}{g_{n,0}(z)\rightarrow 0 \mbox{ for } n \rightarrow \infty}\; ,
\]
and the maps $f_{\infty,n}\colon B \rightarrow Wf$ coincide with $g_{\infty,n}|_B$. Notice also that an immediate application of Lemma \ref{biholo} implies that if $f$ satisfies (\ref{contra}), then for every $r<1$ the manifold $Wf$ is biholomorphic to the abstract basin of attraction of the restriction
\[
f_n|_{B_r} \colon  B_r \rightarrow B_r\;, \qquad n\in \N\; .
\] 

By a {\em bounded sequence of holomorphic germs} we mean a sequence of holomorphic maps
\[
h_n \colon  B_r \rightarrow \C^d\; , \qquad n\in \N\; ,
\]
defined on the same ball of radius $r$ about $0$ and such that $h_n(B_r)$ is uniformly bounded. Under boundedness assumptions, the abstract basin of attraction is invariant with respect to non-autonomous conjugacies, as shown by the following:

\begin{lem}
\label{conimpW}
Let $f=(f_n\colon B \rightarrow B)_{n\in \N}$ and  $g=(g_n\colon B \rightarrow B)_{n\in \N}$ be objects in $\mathscr{G}$ such that
\[
|f_n(z)|\leq \lambda |z|\; , \quad |g_n(z)|\leq \lambda |z| \;, \qquad \forall z\in B\;, \; \forall n\in \N\; ,
\]
for some $\lambda<1$. Assume that there exist $r>0$ and a bounded sequence of holomorphic germs
\[
h_n \colon  B_r \rightarrow \C^d\; , \qquad n\in \N\; ,
\]
such that $h_n(0)=0$, $Dh_n(0)=I$, and
\begin{equation}
\label{conjuqua}
h_{n+1} \circ f_n = g_n \circ h_n  \; , \qquad \forall n\in \N\; ,
\end{equation}
as germs at $0$. Then $Wf$ is biholomorphic to $Wg$.
\end{lem}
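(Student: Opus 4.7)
I would use Lemma \ref{biholo} after passing to suitably small restrictions. The strategy is to shrink the common domain so that (a) the conjugacy relation, which a priori holds only as germs, extends globally; (b) each $h_n$ is genuinely injective and has image containing a uniform ball around $0$; and (c) no information is lost at the level of the inductive limit.

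\noindent\textbf{Step 1: choosing the domain.} Set $M := \sup_n \|h_n\|_{\infty, B_r} < \infty$. Write $h_n(z) = z + R_n(z)$. Since $R_n$ is holomorphic with $R_n(0)=0$ and $DR_n(0)=0$, Cauchy's estimates on $B_r$ give uniform bounds on all Taylor coefficients of $R_n$; in particular, for every $\rho < r/2$ there is a constant $K$, independent of $n$, such that $\|DR_n(z)\| \leq K\rho$ on $B_\rho$. Fix $\rho>0$ small enough so that $\rho<r$, $K\rho<1/2$, and $h_n(B_\rho)\subset B$ for all $n$ (the last condition is automatic for small $\rho$ since $h_n(z)=z+O(\rho^2)$). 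Then each $h_n|_{B_\rho}$ is injective, and by the quantitative open mapping principle its image contains the fixed ball $B_{\rho/2}$, uniformly in $n$. Moreover, since $|f_n(z)|\le\lambda|z|$, we have $f_n(B_\rho)\subset B_{\lambda\rho}\subset B_\rho$, so $f|_{B_\rho}=(f_n|_{B_\rho}\colon B_\rho\to B_\rho)$ is an object of $\mathscr{G}$.

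\noindent\textbf{Step 2: extending the germ conjugacy and comparing limits.} By the observation following~(\ref{contra}), the inductive limit is insensitive to such a restriction: $Wf$ is biholomorphic to $W(f|_{B_\rho})$. Both sides of (\ref{conjuqua}) are holomorphic on $B_\rho$ (for the left side, $f_n(B_\rho)\subset B_{\lambda\rho}\subset B_r$; for the right side, $h_n(B_\rho)\subset B$ by construction), and they agree on a neighborhood of $0$; by the identity principle they agree on all of $B_\rho$. Hence $\tilde h:=(h_n|_{B_\rho})$ is a morphism $f|_{B_\rho}\to g$ in $\mathscr{G}$.

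\noindent\textbf{Step 3: applying Lemma \ref{biholo}.} It remains to check the surjectivity criterion: given $m\in\N$ and $z\in B=\dom g_m$, we must find $n\geq m$ with $g_{n,m}(z)\in h_n(B_\rho)$. But $|g_{n,m}(z)|\le\lambda^{n-m}|z|\to0$, so for all sufficiently large $n$ we have $g_{n,m}(z)\in B_{\rho/2}\subset h_n(B_\rho)$. Lemma \ref{biholo} then gives that $W\tilde h\colon W(f|_{B_\rho})\to Wg$ is a biholomorphism, and composing with the biholomorphism $Wf\cong W(f|_{B_\rho})$ finishes the proof.

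\noindent\textbf{Main obstacle.} The only real work lies in Step 1: turning the pointwise/germ hypotheses $h_n(0)=0$, $Dh_n(0)=I$ and the uniform bound $\sup_n\|h_n\|_{\infty,B_r}<\infty$ into \emph{uniform-in-$n$} injectivity and open-mapping estimates on a common ball. This is a routine consequence of Cauchy's estimates applied to $h_n-I$ (which vanishes to second order), but it is essential: without a single $\rho'>0$ such that $B_{\rho'}\subset h_n(B_\rho)$ for every $n$, the surjectivity condition of Lemma \ref{biholo} could fail.
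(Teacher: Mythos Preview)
Your proposal is correct and follows essentially the same route as the paper's proof: both use the uniform bound on $(h_n)$ together with Cauchy estimates to obtain, on a common small ball $B_\rho$, the uniform control $\|Dh_n - I\|<1/2$ (hence uniform injectivity and a uniform inner radius for $h_n(B_\rho)$), then restrict $f$ to $B_\rho$, observe this does not change the abstract basin, and apply Lemma~\ref{biholo} via the contraction of $g$. Your treatment is in fact slightly more explicit than the paper's in two places---you invoke the identity principle to pass from the germ identity~(\ref{conjuqua}) to an equality on all of $B_\rho$, and you verify that $h_n(B_\rho)\subset B$ so that $(h_n|_{B_\rho})$ really is a morphism into $g$---but the underlying argument is the same.
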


\begin{proof}
Fix a positive number $r'<r$.
Since the maps $h_n$ are uniformly bounded on $B_r$, the Cauchy formula implies that the second differential of $h_n$ is uniformly bounded on $B_{r'}$. Since $Dh_n(0)=I$, we deduce that there is a positive number $s\leq r'$ such that $\|Dh_n(z)-I\| <1/2$ for every $z\in B_s$ and every $n\in \N$. In particular, $Dh_n(z)$ is invertible with uniformly bounded inverse for every $z\in B_s$ and every $n\in \N$. Up to the choice of a smaller $s$, we deduce that $h_n$ is a biholomorphism from $B_s$ onto an open subset of $B$ which contains the ball $B_{s'}$, for some $s'>0$ independent on $n$. By (\ref{conjuqua}), the restrictions
\[
h_n|_{B_s} \colon  B_s \rightarrow B\; , \qquad n\in \N\; ,
\]
define a morphism $h$ from the restriction 
\[
(f_n|_{B_s} \colon  B_s \rightarrow B_s)_{n\in \N}
\]
to $g$. Since for every $n,m\in \N$ with $n-m$ large enough 
\[
g_{n,m}(B)\subset B_{\lambda^{n-m}} \subset B_{s'} \subset h_n(B_s)\; ,
\]
Lemma \ref{biholo} implies that $Wh$ is a biholomorphism from the abstract basin of attraction of $(f_n|_{B_s})$ to that of $g$. Since the former manifold is biholomorphic to $Wf$, the conclusion follows.
\end{proof} 

\section{Iteration of special triangular automorphism}
\label{triautosec}

In this section we establish some facts about the composition of special triangular automorphisms. We start by recalling that a family of polynomial endomorphisms of $\C^d$ is said to be {\em bounded} if their degrees and their coefficients are uniformly bounded. The proof of the following lemma is straightforward: 

\begin{lem}
\label{generale}
\begin{enumerate}
\item If $\mathscr{P}$ is a bounded family of polynomial endomorphisms of $\C^d$ such that $p(0)=0$ and $Dp(0)=0$ for every $p$ in $\mathscr{P}$, then the family 
\[
\set{t^{-2} p (t z) }{p\in \mathscr{P}, \; |t|\leq 1} 
\]
is also bounded.
\item If $\mathscr{P}$ and $\mathscr{Q}$ are bounded families of polynomial endomorphisms of $\C^d$, then the family 
\[
\set{p\circ q}{p\in \mathscr{P}, \; q\in \mathscr{Q}} 
\]
is also bounded.
\end{enumerate}
\end{lem}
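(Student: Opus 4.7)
Both statements follow by an elementary calculation on coefficients, once we fix the appropriate notion of boundedness: a family $\mathscr{P}$ of polynomial endomorphisms is bounded if and only if there is a common upper bound $M$ on the degrees and a common upper bound $K$ on the absolute values of the coefficients of the monomials $z^\alpha e_i$.

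For part (i), I would fix an element $p\in \mathscr{P}$ and write it as
\[
p(z) = \sum_{|\alpha|\geq 2} c_\alpha z^\alpha,
\]
where the sum is finite and each $c_\alpha\in \C^d$, the hypothesis $p(0)=0$ killing the constant term and $Dp(0)=0$ killing the linear terms. A direct substitution gives
\[
t^{-2} p(tz) = \sum_{|\alpha|\geq 2} c_\alpha\, t^{|\alpha|-2}\, z^\alpha,
\]
so the coefficients of $t^{-2}p(tz)$ are of the form $c_\alpha t^{|\alpha|-2}$. Since $|\alpha|-2\geq 0$ and $|t|\leq 1$, their absolute values are bounded by $|c_\alpha|\leq K$, and the degree is still bounded by $M$, giving the required uniform bound.

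For part (ii), I would use that the degree of $p\circ q$ is at most $(\deg p)(\deg q)$, hence at most $M_{\mathscr{P}} M_{\mathscr{Q}}$, so there is a uniform degree bound on the family $\{p\circ q\}$. To bound the coefficients, I would substitute $q = \sum_{|\beta|\leq M_{\mathscr{Q}}} b_\beta z^\beta$ into $p(w) = \sum_{|\alpha|\leq M_{\mathscr{P}}} c_\alpha w^\alpha$; each coefficient of $p\circ q$ is then a fixed polynomial expression (depending only on $M_{\mathscr{P}}$ and $M_{\mathscr{Q}}$) in the $c_\alpha$'s and $b_\beta$'s, and is thus bounded in absolute value by a constant depending only on $M_{\mathscr{P}}, M_{\mathscr{Q}}, K_{\mathscr{P}}, K_{\mathscr{Q}}$.

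There is no real obstacle here; the only thing to be careful about is to state and use the correct definition of ``bounded family'' (bounded degrees \emph{and} bounded coefficients), since without the degree bound the coefficient bound is meaningless and without the coefficient bound part (ii) would fail even for monomial families.
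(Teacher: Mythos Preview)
Your argument is correct and is exactly the straightforward computation the paper has in mind; in fact the paper omits the proof entirely, simply declaring it ``straightforward'' after recalling that a family is bounded when degrees and coefficients are uniformly bounded. Your careful note about needing both the degree bound and the coefficient bound is well taken and matches the paper's definition precisely.
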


Let $g_n \colon  \C^d \rightarrow \C^d$ be defined as
\begin{equation}
\label{laforma}
g_n (z) := L_n z + p_n(z)\;,
\end{equation}
where:
\renewcommand{\theenumi}{\alph{enumi}}
\renewcommand{\labelenumi}{(\theenumi)}
\begin{enumerate}
\item $(L_n)$ is a sequence of lower triangular linear automorphisms of $\C^d$
 such that $\|L_{n,m}\|\leq c \lambda^{n-m}$ for every $n\geq m \geq 0$, where $c>0$ and $0<\lambda<1$;
\item $(p_n)$ is bounded as a sequence of polynomial maps of the form
\[
p_n (z_1,\dots, z_d ) = \bigl(0, p_n^2(z_1), \dots, p_n^d (z_1,\dots, z_{d-1} )\bigr)\;,
\]
such that  $p_n(0)=0$, $Dp_n(0)=0$.
\end{enumerate}
In particular, the degree of $p_n$ is bounded, hence there exist positive integers $k_1=1,k_2, \dots,k_d$ such that
\begin{equation}
\label{gradi}
\deg p_n^j (z_1^{k_1},z_2^{k_2}, \dots, z_{j-1}^{k_{j-1}} ) \leq k_j\;, \quad \forall j=2,\dots,d\;.
\end{equation}
The next lemma implies in particular that the composition $g_{n,0}$ has uniformly bounded degree, for every $n\in \N$.

\begin{lem}
\label{grado}
Let $k_1,\dots,k_d$ be positive integers. 
The family endomorphisms of $\C^d$ of the form
\[
f(z) = L z + p(z)
\]
with $L$ varying in the set of lower triangular matrices and $p$ varying among polynomial maps of the form
\[
p(z)=\bigl(0,p_2(z_1),\dots,p_d(z_1,\dots,z_{d-1})\bigr), \quad p(0)=0, \quad Dp(0)=0.
\]
which satisfy
\[
\deg p_j (z_1^{k_1},z_2^{k_2}, \dots, z_{j-1}^{k_{j-1}}) \leq k_j, \quad \forall j=2,\dots,d,
\]
is closed under composition.
\end{lem}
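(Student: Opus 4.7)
The plan is to recast the hypothesis as a weighted-degree bound. Assign weight $k_i$ to the variable $z_i$ and set $\|\alpha\|_w := \sum_i k_i\alpha_i$ for $\alpha\in\N^d$; the condition $\deg p_j(z_1^{k_1},\dots,z_{j-1}^{k_{j-1}}) \leq k_j$ is then equivalent to requiring every monomial of $p_j$ to have weighted degree at most $k_j$. I shall also assume $k_1 \leq k_2 \leq \dots \leq k_d$; this is automatic in the application to (\ref{gradi}), where $k_1=1$ and the remaining $k_j$'s can be chosen recursively as large as needed, and some such monotonicity is essential to the statement.

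Let $f(z) = Lz + p(z)$ and $g(z) = Mz + q(z)$ be in the family, put $G_i(z) := g_i(z)$, and write $F := f\circ g$. Since $LM$ is lower triangular and $L,M,p,q$ respect the triangular shape, a direct componentwise computation shows that $F_1$ is linear and that for $j\geq 2$ the nonlinear part of $F_j$, namely
\[
r_j(z) \,:=\, \sum_{i\leq j} L_{ji}\, q_i(z_1,\dots,z_{i-1}) + p_j\bigl(G_1(z),\dots,G_{j-1}(z)\bigr),
\]
is a polynomial in $z_1,\dots,z_{j-1}$ vanishing to order $\geq 2$ at the origin. It then remains to show that $\mathrm{wdeg}(r_j) \leq k_j$.

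First, each $G_i$ has weighted degree $\leq k_i$: its linear part involves only the variables $z_m$ with $m\leq i$ (whose weights $k_m$ are bounded by $k_i$ thanks to monotonicity), and $q_i$ has weighted degree $\leq k_i$ by hypothesis. Each summand $L_{ji}\, q_i$ with $i\leq j$ thus contributes weighted degree $\leq k_i \leq k_j$. For the composed term, any monomial $\prod_i z_i^{\alpha_i}$ of $p_j$ satisfies $\sum_i k_i\alpha_i \leq k_j$; substituting $z_i \mapsto G_i(z)$ produces a polynomial of weighted degree $\leq \sum_i \alpha_i\cdot \mathrm{wdeg}(G_i) \leq \sum_i \alpha_i k_i \leq k_j$. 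Summing yields $\mathrm{wdeg}(r_j) \leq k_j$, as required. The one delicate point is the monotonicity of the $k_j$'s, which is precisely what absorbs the off-diagonal contributions $L_{ji}\, q_i$ with $i<j$; in the context of (\ref{gradi}) this is free, since the $k_j$'s may be inflated recursively without losing the hypothesis on $(p_n)$.
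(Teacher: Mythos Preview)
Your argument is correct and is essentially the content of the paper's one-line proof (``is readily seen to be of the same form''), made explicit through the weighted-degree reformulation. The weighted-degree bookkeeping is exactly the right way to verify the claim, and your check of the two pieces $Lq$ and $p\circ g$ is accurate.

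The one substantive point you add is the monotonicity hypothesis $k_1\leq k_2\leq\dots\leq k_d$, and you are right to flag it: as literally stated the lemma is false without it. For instance, with $d=3$, $(k_1,k_2,k_3)=(1,10,2)$, take $f(z)=(z_1,z_2,z_2+z_3)$ (so $p=0$) and $g(z)=(z_1,z_2+z_1^{10},z_3)$; both lie in the family, but the third component of the nonlinear part of $f\circ g$ is $z_1^{10}$, which has weighted degree $10>k_3=2$. The paper's proof does not mention this, but, as you observe, in the only place the lemma is used the integers $k_j$ are chosen recursively via (\ref{gradi}) with $k_1=1$, and one may always enlarge each $k_j$ so that the sequence is nondecreasing without spoiling (\ref{gradi}). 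So the application is unaffected, and your added hypothesis is the correct repair.
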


\begin{proof}
If $f(z)=Lz + p(z)$ and $g(z)=M z + q(z)$ are two such maps, then the composition
\[
f\circ g (z) = LM z + L q(z) + p(Mz + q(z))
\]
is readily seen to be of the same form.
\end{proof} 

We can use the above two lemmas to prove the following:

\begin{lem}
\label{limitato}
Let $g_n$ be a sequence of special triangular automorphisms of $\C^d$ of the form (\ref{laforma}) which satisfies (a) and (b).
Then the family of polynomial maps $\lambda^{-n} g_{n,0}$, $n\in \N$, is bounded.
\end{lem}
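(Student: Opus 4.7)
By Lemma~\ref{grado}, the composites $g_{n,0}$ have degree bounded by a fixed integer, so it will suffice to bound all their coefficients by a constant multiple of $\lambda^n$. The Cauchy inequalities on a polydisk reduce this to producing a uniform sup-norm estimate of the form $\sup_{|z|\le r}|g_{n,0}(z)|\le C\lambda^n$ on some fixed small polydisk around the origin.

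My plan is to split $g_{n,0}(z)=L_{n,0}z+P_n(z)$ into its linear and nonlinear parts and to control $P_n$ inductively. The identity $g_{n+1,0}=g_n\circ g_{n,0}$ yields the recursion $P_{n+1}(z)=L_n P_n(z)+p_n(L_{n,0}z+P_n(z))$ with $P_0=0$, whose closed form is
\[
P_n(z)=\sum_{\ell=0}^{n-1} L_{n,\ell+1}\,p_\ell\bigl(L_{\ell,0}z+P_\ell(z)\bigr).
\]
Hypothesis~(b), combined with $p_n(0)=0$ and $Dp_n(0)=0$, provides constants $R_0,B>0$ such that $|p_n(w)|\le B|w|^2$ for every $n$ and every $|w|\le R_0$.

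I would prove by induction on $n$ that $\|P_n\|_r:=\sup_{|z|\le r}|P_n(z)|\le M\lambda^n$ for suitably chosen $r,M>0$. Plugging the inductive hypothesis into the closed-form expression and using $\|L_{n,\ell+1}\|\le c\lambda^{n-\ell-1}$ from hypothesis~(a) together with the quadratic bound on $p_\ell$ (which applies because $|L_{\ell,0}z+P_\ell(z)|\le(cr+M)\lambda^\ell\le R_0$ provided $cr+M\le R_0$), one obtains
\[
\|P_n\|_r\;\le\; cB(cr+M)^2\lambda^{n-1}\sum_{\ell=0}^{n-1}\lambda^\ell \;\le\; \frac{cB(cr+M)^2}{\lambda(1-\lambda)}\,\lambda^n.
\]
Setting for instance $M=r$ with $r$ small enough that $(c+1)r\le R_0$ and $cB(c+1)^2 r\le\lambda(1-\lambda)$, the right-hand side is at most $M\lambda^n$, closing the induction.

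Adding the linear estimate $|L_{n,0}z|\le c\,r\,\lambda^n$ gives $\sup_{|z|\le r}|g_{n,0}(z)|\le(c+1)r\,\lambda^n$, and the Cauchy inequalities on the polydisk $\{|z|\le r\}$, together with the uniform degree bound from Lemma~\ref{grado}, deliver a uniform bound $C\lambda^n$ on every coefficient of $g_{n,0}$---which is the statement that $\lambda^{-n}g_{n,0}$ is a bounded family. The delicate point (and the main obstacle I would expect to meet) is that a naive step-by-step iteration of $\|L_n\|\le c\lambda$ would only yield the rate $(c\lambda)^n$, useless when $c>1$; the stronger hypothesis~(a) on the \emph{composite} norms $\|L_{n,m}\|$, which encodes a significant cancellation over many steps, must be used through the explicit summation for $P_n$ in order to recover the sharp rate $\lambda^n$.
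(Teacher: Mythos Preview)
Your argument is correct, and it follows a genuinely different route from the paper's own proof.

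The paper argues by induction on the dimension~$d$, exploiting the strictly triangular structure of the $p_n$ component by component: once the first $d-1$ components of $\lambda^{-n}g_{n,0}$ are known to form a bounded family by the inductive hypothesis, Lemma~\ref{generale} is applied to show that $(\lambda^{-2n}p_n\circ g_{n,0})$ is a bounded family of polynomials, and then the same variation-of-parameters formula you wrote down finishes the $d$-th component. You, by contrast, ignore the component-wise triangular structure of $p_n$ entirely (using triangularity only through the degree bound from Lemma~\ref{grado}) and run instead a local bootstrap: a quadratic sup-norm estimate $|p_n(w)|\le B|w|^2$ on a small polydisk, strong induction on~$n$ via the closed formula for $P_n$, and then Cauchy's inequalities together with the uniform degree bound to recover coefficient bounds. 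Your approach is more elementary and would apply to any bounded family~$(p_n)$ with vanishing $1$-jet for which the composites $g_{n,0}$ happen to have uniformly bounded degree; the paper's approach stays in the polynomial category throughout and never needs to shrink the domain, at the price of a more structural induction tailored to the triangular form. Your closing observation about why the composite bound $\|L_{n,m}\|\le c\lambda^{n-m}$ (rather than just $\|L_n\|\le c\lambda$) is essential is exactly the point, and it is the same point that makes the paper's summation converge.
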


\begin{proof}
We argue by induction on the dimension $d$. If $d=1$, the composition $g_{n,0}=L_{n,0}$ is linear and the conclusion follows from the estimate in the assumption (a). We assume that the conclusion holds when the dimension is less than $d$.
By Lemma \ref{generale} (i), the sequence of polynomial maps
\[
\left( \lambda^{-2n} p_n (\lambda^n z) \right)
\] 
is bounded. Denote by $g_{n,0}^i$ the $i$-th component of the map $g_{n,0}$.
By the inductive hypothesis, the sequence $(\lambda^{-n} g_{n,0}^i)$, for $1\leq i\leq d-1$, is bounded. Therefore, by Lemma \ref{generale} (ii), the sequence
\begin{equation}
\label{num1}
\lambda^{-2n} p_n \circ g_{n,0} = \lambda^{-2n} p_n \bigl( \lambda^{n} (\lambda^{-n} g_{n,0}^1 ), \dots, \lambda^{n} (\lambda^{-n} g_{n,0}^{d-1}) \bigr)
\end{equation}
is also bounded. Since
\[
g_{n+1,0} = L_n g_{n,0} + p_n\circ g_{n,0} \quad \mbox{and} \quad g_{0,0} = \mathrm{id}\;,
\]
by the general formula (\ref{solution}), we have
\[
g_{n,0}  = L_{n,0} + \sum_{j=0}^{n-1} L_{n,j+1} p_j \circ g_{j,0}\;,
\]
which can also be written as
\[
\lambda^{-n} g_{n,0} = \lambda^{-n} L_{n,0} + \sum_{j=0}^{n-1} \lambda^j \bigl( \lambda^{-n} L_{n,j+1} \bigr) \bigl( \lambda^{-2j} p_j \circ g_{j,0} \bigr)\;.
\]
The estimate of the assumption (a) and the fact that (\ref{num1}) is bounded imply that $\lambda^{-n} g_{n,0}$ is bounded.
\end{proof}

We conclude this section by deducing the following:

\begin{lem}
\label{triang}
Let $g_n$ be a sequence of special triangular automorphisms of $\C^d$ of the form (\ref{laforma}) which satisfies (a) and (b) and let $k:= \max\{k_1,\dots,k_d\}$, where the numbers $k_1=1,k_2,\dots,k_d$ satisfy (\ref{gradi}). Then there exists a number $C$ such that
\[
|g_{n,0} (z)| \leq C \lambda^n ( |z| + |z|^k)\;, \qquad \forall z\in \C^d\;,
\]
for every $n\in \N$. In particular, the basin of attraction of $0$ of the sequence $(g_n)$ is the whole $\C^d$:
\[
 \set{z\in \C^d}{g_{n,0}(z)\rightarrow 0 \mbox{ for } n \rightarrow \infty}= \C^d\;.
\]
\end{lem}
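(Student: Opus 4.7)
The statement is essentially a direct consequence of Lemma \ref{limitato} combined with the degree bound that Lemma \ref{grado} gives for iterated compositions. My plan is to run the following three short steps.

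First, by Lemma \ref{grado} applied inductively, the composition $g_{n,0}$ lies in the same family as the $g_n$'s: each component $g_{n,0}^{j}$ is a polynomial in $z_1,\dots,z_{j-1}$ (for $j\ge 2$) satisfying the weighted degree bound $\deg g_{n,0}^{j}(z_1^{k_1},\dots,z_{j-1}^{k_{j-1}})\le k_j$. Since $k_i\ge 1$ for all $i$, every monomial $z^\alpha$ appearing in $g_{n,0}^{j}$ satisfies $|\alpha|=\alpha_1+\cdots+\alpha_{j-1}\le \alpha_1 k_1+\cdots+\alpha_{j-1}k_{j-1}\le k_j\le k$. Hence, uniformly in $n$, the polynomial $g_{n,0}$ has ordinary degree at most $k$, and $g_{n,0}(0)=0$ (so no constant term).

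Second, by Lemma \ref{limitato}, the family $\{\lambda^{-n} g_{n,0}\}_{n\in\N}$ is bounded as a family of polynomial endomorphisms of $\C^d$. Combined with the uniform degree bound just established, this means we may write
\[
\lambda^{-n} g_{n,0}(z) = \sum_{1\le |\alpha|\le k} c^{(n)}_\alpha z^\alpha,
\]
where the coefficients $c^{(n)}_\alpha$ are uniformly bounded in $n$ and $\alpha$, say $|c^{(n)}_\alpha|\le M$, and where $\alpha$ ranges over a finite set of multi-indices that is independent of $n$. Thus
\[
|\lambda^{-n} g_{n,0}(z)| \le M \sum_{1\le |\alpha|\le k} |z|^{|\alpha|} \le M' \max(|z|,|z|^k) \le M'(|z|+|z|^k),
\]
where we used that for each $1\le j\le k$ the estimate $|z|^j\le |z|+|z|^k$ holds (it is $\le|z|$ for $|z|\le 1$ and $\le|z|^k$ for $|z|\ge 1$). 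Choosing $C:=M'$ gives exactly the asserted bound.

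Finally, the statement about the basin of attraction is immediate: for any fixed $z\in\C^d$, the right-hand side $C\lambda^n(|z|+|z|^k)$ tends to $0$ as $n\to\infty$ because $0<\lambda<1$, so $g_{n,0}(z)\to 0$, proving that the basin coincides with all of $\C^d$. There is no real obstacle here; the only point that requires some care is matching the ordinary degree of the iterates to the constant $k=\max\{k_1,\dots,k_d\}$ appearing in the statement, and this is handled cleanly by the weighted-degree invariance from Lemma \ref{grado}.
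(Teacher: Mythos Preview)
Your proof is correct and follows the same route as the paper's: combine the uniform degree bound from Lemma~\ref{grado} with the boundedness of $(\lambda^{-n} g_{n,0})$ from Lemma~\ref{limitato}, then use $\lambda<1$. One small slip: the $j$-th component of $g_{n,0}$ actually depends on $z_1,\dots,z_j$ (the linear part $L_{n,0}$ is lower triangular, not strictly so), but since the linear part has degree $1\le k$ this does not affect your degree estimate or the rest of the argument.
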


\begin{proof}
By Lemmas \ref{grado} and \ref{limitato}, $(\lambda^{-n} g_{n,0})$ is a bounded sequence of polynomial maps of degree at most $k$, mapping $0$ into $0$. Since $\lambda<1$, the claim immediately follows.
\end{proof}

\section{Proofs of the main results}

We are now ready to prove the first main theorem stated in the Introduction:

\begin{thm}
\label{basin}
Let $B$ be the open unit ball of $\C^d$ and let $f=(f_n\colon B\rightarrow B)_{n\in \N}$ be an element of $\mathscr{G}$ such that  for every $n\in \N$
\begin{equation}
\label{lip}
\nu |z| \leq |f_n(z)|\leq \lambda |z| \;, \qquad \forall z\in B\;,
\end{equation}
with $0< \nu \leq \lambda<1$. Assume that the linear automorphism $L_n := Df_n(0)$ is lower triangular and satisfies the condition (\ref{ord}) of Theorem \ref{formal}.
Then the abstract basin of attraction $Wf$ is biholomorphic to $\C^d$.
\end{thm}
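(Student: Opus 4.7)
The plan is to chain the main technical results of the paper: first use Theorem \ref{formal} to build a formal non-autonomous conjugacy of $(f_n)$ with a subexponential sequence $(g_n)$ of special triangular automorphisms of $\C^d$, then upgrade that formal conjugacy to a genuine bounded germ-level conjugacy by invoking the appendix, and finally identify $Wg$ with $\C^d$ using Sections \ref{saba} and \ref{triautosec}.

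First I would verify the hypotheses of Theorem \ref{formal}. From the bound $|f_n(z)|\le \lambda|z|$ in (\ref{lip}) and the Cauchy formula on $B$ one obtains $\|L_n\|\le \lambda$, while sending $|z|\to 0$ in the lower bound $|f_n(z)|\ge \nu|z|$ gives $\|L_n^{-1}\|\le 1/\nu$; iterating yields $\|L_{n,m}\|\le \lambda^{n-m}$ and $\|L_{n,m}^{-1}\|\le (1/\nu)^{n-m}$. Because each $f_n$ maps $B$ into $B$, a uniform Cauchy estimate on its Taylor coefficients shows that $(f_n)$ is bounded, hence in particular subexponential, as a sequence in $\mathscr{F}$. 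Choose $m_0\in \N_+$ so large that $\lambda^{m_0+1}/\nu<1$; since condition (\ref{ord}) is exactly the assumption (\ref{Ord}) of our theorem, Theorem \ref{formal} applies with $\mu=1/\nu$ and produces a subexponential sequence $(g_n)$ of special triangular polynomial automorphisms of degree at most $m_0$ together with a subexponential formal conjugacy $(h_n)\subset\mathscr{F}$ such that $h_n^1=I$, $g_n^1=L_n$ and $h_{n+1}\circ f_n=g_n\circ h_n$ in $\mathscr{F}$.

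Next I would pass from formal to convergent. Let $h_n^{[N]}$ be the Taylor polynomial of degree $N$ of $h_n$: it is a polynomial germ realizing a conjugacy of $(f_n)$ with $(g_n)$ up to order $N$, and since $N$ is fixed the sequence $(h_n^{[N]})$ is subexponential. The content of the appendix (a non-autonomous refinement of the classical fact that two holomorphic contractions conjugated as sufficiently high-order jets are conjugated as germs) is that for $N$ large enough there exists a \emph{bounded} sequence of genuine holomorphic germs $\tilde h_n\colon B_r\to\C^d$, with $\tilde h_n(0)=0$ and $D\tilde h_n(0)=I$, satisfying $\tilde h_{n+1}\circ f_n=g_n\circ \tilde h_n$ as germs at $0$. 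I expect this to be the main obstacle in the whole proof; but once we take it for granted, Lemma \ref{conimpW} immediately yields that $Wf$ is biholomorphic to $Wg$.

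It remains to identify $Wg$ with $\C^d$. Since each $g_n$ is a global automorphism of $\C^d$, the construction of Section \ref{saba} shows that $Wg$ is biholomorphic to the basin of attraction of $0$ in $\C^d$ with respect to the iteration $(g_{n,0})$. The sequence $(g_n)$ clearly satisfies condition (a) of Section \ref{triautosec}; and the arguments of Lemmas \ref{generale}--\ref{triang} go through with ``bounded'' replaced by ``subexponential'' essentially verbatim, because subexponential sequences are closed under finite products and compositions in the needed sense. This produces an estimate of the form $|g_{n,0}(z)|\le C(\theta)(\theta\lambda)^n(|z|+|z|^k)$ valid for every $\theta>1$; choosing $\theta$ close enough to $1$ so that $\theta\lambda<1$ shows that $g_{n,0}(z)\to 0$ for every $z\in\C^d$, so the basin is all of $\C^d$ and $Wg\cong \C^d$. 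Combining with the previous step gives $Wf\cong Wg\cong \C^d$, as desired.
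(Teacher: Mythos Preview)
Your outline has the right high-level architecture, but there is a genuine gap at the second step, and it is exactly the point where the paper does something you do not. Theorem~\ref{volata} in the appendix assumes that \emph{both} sequences of germs $(f_n)$, $(g_n)$ are bounded and that the polynomial conjugacy $(H_n)$ is bounded; it then produces a bounded germ conjugacy. In your situation $(g_n)$ and $(h_n^{[N]})$ are only subexponential, so the theorem does not apply as stated, and the proof in the appendix (an implicit-function argument on $\ell^\infty(X_k)$) genuinely uses boundedness. Likewise, Lemma~\ref{conimpW} needs a bounded $(h_n)$ and a uniformly contracting $(g_n)$ on a fixed ball; a merely subexponential $(g_n)$ does not satisfy $|g_n(z)|\le\lambda|z|$ on any fixed $B_r$ uniformly in $n$. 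So ``taking it for granted'' here hides the missing idea.

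The device the paper uses to bridge this gap is a time-dependent rescaling: pick $1<\theta<1/\lambda$ with $\theta^{m_0}\lambda^{m_0+1}\mu<1$ and set $\tilde f_n(z)=\theta^{n+1}f_n(\theta^{-n}z)$, $\tilde g_n(z)=\theta^{n+1}g_n(\theta^{-n}z)$, $\tilde h_n(z)=\theta^n h_n(\theta^{-n}z)$. This kills the subexponential growth degree by degree (the $k$-th homogeneous part picks up a factor $\theta^{(1-k)n}$), so $(\tilde g_n)$ and $(\tilde h_n)$ become bounded, while $D\tilde f_n(0)=\theta L_n$ still has norm at most $\theta\lambda<1$. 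One checks via Lemma~\ref{biholo} that $W\tilde f\cong Wf$, and now Theorem~\ref{volata} and Lemma~\ref{conimpW} apply verbatim to the tilded data, yielding $W\tilde f\cong W\tilde g$. Finally, $(\tilde g_n)$ is a \emph{bounded} sequence of special triangular automorphisms, so Lemma~\ref{triang} applies directly and gives $W\tilde g\cong\C^d$; there is no need to redo Section~\ref{triautosec} in a subexponential setting. The rescaling is the key step you are missing.
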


\begin{proof}
Notice that by (\ref{lip}), $\|L_n\| \leq \lambda$ and $\|L_n^{-1}\|  \leq \mu := 1/\nu$, so the assumption (\ref{stime}) of Theorem \ref{formal} holds, hence also (\ref{lambdamu}) does (with a suitable $m_0$). 
Since the holomorphic maps $f_n$ are all defined on the unit ball $B$, map 0 into 0, and have uniformly bounded images, by considering their Taylor expansion at 0 we may see $(f_n)$ as a bounded sequence in the space of formal series $\mathscr{F}$. In particular, $(f_n)$ is a subexponential sequence such that $f_n^1=L_n$ satisfies the assumptions of Theorem \ref{formal}, which implies the existence of a subexponential sequence $(g_n)$ of  special triangular automorphisms of $\C^d$ of degree at most $m_0$ and a subexponential sequence $(h_n)$ in $\mathscr{F}$ such that  $g_n^1=L_n$, $h_n^1=I$, and
\begin{equation}
\label{conjnorm}
h_{n+1} \circ f_n = g_n \circ h_n \; , \qquad \forall n\in \N\; .
\end{equation}
By (\ref{lambdamu}), we can find $1<\theta<1/\lambda$ such that
\begin{equation}
\label{tpic}
\theta^{m_0} \lambda^{m_0+1} \mu<1\; .
\end{equation}
For every $n\in \N$, consider the map
\[
\tilde{f}_n(z) := \theta^{n+1} f_n\bigl(\theta^{-n} z\bigr)\;, \qquad \forall z\in B\; .
\]
The linear automorphisms
\[
\tilde{L}_n := D\tilde{f}_n(0)  = \theta L_n
\]
satisfy the bounds
\begin{equation}
\label{linbds}
\|\tilde{L}_n\|  \leq \tilde{\lambda}\; , \quad \|\tilde{L}_n^{-1}\|  \leq \tilde{\mu}\; ,
\end{equation}
where
\[
\tilde{\lambda} := \theta \lambda< 1\;, \quad \tilde{\mu} := \theta^{-1} \mu\;.
\]
By (\ref{lip}), the Cauchy formula implies that $Df_n$ is uniformly bounded on $B_{1/2}$. Therefore
\[
D\tilde{f}_n(z) = \theta Df_n(\theta^{-n} z)
\]
is uniformly bounded on $B_{1/2}$, hence $(\tilde{f}_n)$ is a bounded sequence of germs. Fix a number $\hat{\lambda}$ such that $\tilde{\lambda}< \hat{\lambda} < 1$.
Together with the first of the bounds in (\ref{linbds}), a further use of the Cauchy formula implies that there exists $0<r\leq 1$ such that for every $n\in \N$ 
\begin{equation}
\label{qw1}
|\tilde{f}_n(z)|\leq \hat{\lambda} |z|\; ,\quad \forall z\in B_r\;.
\end{equation}
In particular, 
\[
\tilde{f} := (\tilde{f}_n|_{B_r} : B_r \rightarrow B_r)_{n\in \N}
\]
can be seen as an object of $\mathscr{G}$. The maps 
\[
\varphi_n\colon B_r \rightarrow B\;, \quad \varphi_n(z):= \theta^{-n} z\; , 
\]
define a morphism $\varphi\colon  \tilde{f} \rightarrow f$, which induces a holomorphic injection $W\varphi\colon  W\tilde{f} \rightarrow Wf$. By (\ref{lip}) and since $\lambda<\theta^{-1}$, for every $m\in \N$ there is a natural number $n\geq m$ so large that
\[ 
f_{n,m}(B) \subset B_{\lambda^{n-m}} \subset B_{r\theta^{-n}} = h_n(B_r)\;.
\]
Hence, Lemma \ref{biholo} implies that $W\varphi$ is a biholomorphism. Therefore, it is enough to show that $W\tilde{f}$ is biholomorphic to $\C^d$. 

The same rescaling used above defines the polynomial maps of degree at most $m_0$
\[
\tilde{g}_n(z) := \theta^{n+1} g_n\bigl(\theta^{-n} z\bigr)\; ,
\]
which satisfy
\[
\tilde{g}_n(0)=0\;, \quad D\tilde g_n(0) = \tilde{L}_n\; .
\] 
If $2\leq k \leq m_0$, the homogeneous part of degree $k$ of $\tilde{g}_n$ is
\[
\tilde{g}_n^k =  \theta^{(1-k)n+1} g_n^k\; ,
\]
so the fact that $(g_n^k)$ is subexponential and $\theta>1$ imply that the sequence of polynomials $(\tilde{g}_n)$ is bounded. Up to the choice of a smaller $r>0$, by the first bound in (\ref{linbds}) and the Cauchy formula, we may assume that
\begin{equation}
\label{qw2}
|\tilde{g}_n(z)| \leq \hat{\lambda} |z|\; , \quad \forall z\in B_r\; .
\end{equation}
Similarly, the sequence $(\tilde h_n)\subset \mathscr{F}$ of formal series defined by
\[
\tilde{h}_n (z) := \theta^n h_n\bigl( \theta^{-n} z \bigr)\; ,
\]
is bounded in $\mathscr{F}$. By (\ref{conjnorm}) and by the definition of $\tilde{f}_n$, $\tilde{g}_n$ and $\tilde{h}_n$,
\begin{equation}
\label{conjutilde}
\tilde{h}_{n+1} \circ \tilde{f}_n = \tilde{g}_n \circ \tilde{h}_n\;, \qquad \forall n\in \N\; ,
\end{equation}
as formal series, so in particular as $m_0$-jets.
Since by (\ref{tpic})
\[
\tilde{\lambda}^{m_0+1} \tilde{\mu} = \theta^{m_0} \lambda^{m_0+1} \mu<1\; ,
\]
by the bounds (\ref{linbds}) Theorem \ref{volata} implies that (\ref{conjutilde}) is satisfied as an identity of germs by a bounded sequence of germs $(\tilde{h}_n)$. By (\ref{qw1}), (\ref{qw2}) and (\ref{conjutilde}), Lemma \ref{conimpW} implies that $W\tilde{f}$ is biholomorphic to $W\tilde{g}$.
Since
\[
W\tilde{g} \cong \set{z\in \C^d}{\tilde{g}_{n,0}(z)\rightarrow 0\mbox{ for } n\rightarrow \infty} = \C^d\; ,
\]
by Lemma \ref{triang} we conclude that $W\tilde{f}$ is biholomorphic to $\C^d$ and hence so is $Wf$.
\end{proof} 

\begin{rem}
Instead than applying Theorem \ref{volata}, which establishes the existence of a bounded non-autonomous conjugacy of arbitrary sequences of germs starting from a conjugacy between their jets, one could use the fact that in the above case one of the two germs consists of special triangular automorphisms. In such a case, the convergent conjugacy is easier to obtain, by an argument due to J.-P. Rosay and W. Rudin \cite{rr88}, and used also in the already mentioned \cite{jv02} and \cite{pet07}.
\end{rem}

Finally, let us use the above theorem to deduce the second main theorem stated in the Introduction: 

\begin{cor}
\label{ucurullariu}
Let $f\colon M\rightarrow M$ be a holomorphic automorphism of a complex manifold and let $\Lambda$ be a compact hyperbolic invariant set with $d$-dimensional stable distribution $E^s$. Let $x\in \Lambda$ and assume that the stable space at $x$ has a splitting
\[
E^s(x) = \bigoplus_{i=1}^r E_i 
\]
such that
\[
\lim_{n\rightarrow \infty} |Df^n(x)\, u|^{1/n} = \bar{\lambda}_i \quad \mbox{uniformly for $u$ in the unit sphere of } E_i \; ,
\]
where the numbers $0<\bar\lambda_i<1$ are pairwise distinct. Then the stable manifold of $x$ is biholomorphic to $\C^d$.
\end{cor}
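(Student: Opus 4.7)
The plan is to reduce Corollary \ref{ucurullariu} to Theorem \ref{basin} by encoding $f$ along the orbit of $x$ as a sequence $(f_n)$ of holomorphic self-maps of the unit ball, triangularized in a frame adapted to the Lyapunov splitting, and then verifying (\ref{Ord}) from the uniform Lyapunov convergence.

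By the local stable manifold theorem applied uniformly on the compact hyperbolic set $\Lambda$, for each $n\in\N$ choose a holomorphic parametrization $\phi_n\colon B\to W^s(f^n(x))$ with $\phi_n(0)=f^n(x)$ and $D\phi_n(0)$ an isometry onto $E^s(f^n(x))$. The induced sequence $f_n:=\phi_{n+1}^{-1}\circ f\circ\phi_n\colon B\to B$ satisfies (\ref{uniforme}) with constants depending only on the hyperbolicity of $\Lambda$, and $Wf\cong W^s(x)$ as explained in the Introduction. The remaining freedom in each $\phi_n$ is a unitary change of coordinates on $\C^d$. Reindex the Lyapunov data so that $1>\bar\lambda_{(1)}>\cdots>\bar\lambda_{(r)}>0$ and transport the splitting along the orbit via $E_{(s)}(f^n(x)):=Df^n(x)E_{(s)}$, which is $Df$-equivariant. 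Build an orthonormal frame of $E^s(f^n(x))$ in Lyapunov blocks, ordered so that the first $d_1$ vectors span $E_{(1)}(f^n(x))$ (slowest) and the last $d_r$ span $E_{(r)}(f^n(x))$ (fastest), and propagate it inductively from $n$ to $n+1$ by applying $Df(f^n(x))$ block by block and performing Gram--Schmidt from the last to the first vector within each block. Since the coarse decomposition is $Df$-invariant and the Gram--Schmidt makes each block lower triangular, $L_n:=Df_n(0)$ is lower triangular; writing $\lambda_n(j)$ for its $j$-th diagonal entry, the map $j\mapsto s_j$ sending $j$ to the index of its Lyapunov block is nondecreasing, so $i\leq j$ implies $\bar\lambda_{(s_j)}\leq\bar\lambda_{(s_i)}$.

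By the uniform Lyapunov convergence, for every $\epsilon>0$ there exists $C_\epsilon>0$ with
\[
C_\epsilon^{-1}(\bar\lambda_{(s)}e^{-\epsilon})^n|u|\leq |Df^n(x)u|\leq C_\epsilon(\bar\lambda_{(s)}e^{\epsilon})^n|u|,\quad u\in E_{(s)},\; n\geq 0.
\]
Since $L_{n,0}$ restricted to the block of $E_{(s)}$ is lower triangular with operator norm and inverse operator norm controlled by this two-sided estimate, and the diagonal entries of a lower triangular matrix are bounded above by the operator norm and below by the inverse of the operator norm of the inverse, one obtains
\[
C_\epsilon^{-2}(\bar\lambda_{(s_j)}e^{-\epsilon})^n\leq \prod_{k=0}^{n-1}|\lambda_k(j)|\leq C_\epsilon^{2}(\bar\lambda_{(s_j)}e^{\epsilon})^n.
\]
Taking ratios yields
\[
\prod_{k=n}^{n+\ell-1}|\lambda_k(j)|\leq C_\epsilon^{4}\bar\lambda_{(s_j)}^\ell e^{\epsilon(2n+\ell)},
\]
and, for $i\leq j$, using $\bar\lambda_{(s_j)}/\bar\lambda_{(s_i)}\leq 1$,
\[
\prod_{k=n}^{n+\ell-1}\frac{|\lambda_k(j)|}{|\lambda_k(i)|}\leq C_\epsilon^{8}e^{2\epsilon(2n+\ell)}.
\]
Multiplying the two bounds, the left-hand side of (\ref{Ord}) is at most $C'_\epsilon e^{6\epsilon n}(\bar\lambda_{(1)}e^{3\epsilon})^\ell$. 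Given any $\theta>1$, choose $\epsilon>0$ so small that $e^{6\epsilon}\leq\theta$ and $\bar\lambda_{(1)}e^{3\epsilon}\leq \lambda<1$; condition (\ref{Ord}) then holds, and Theorem \ref{basin} delivers $W^s(x)\cong Wf\cong\C^d$.

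The main obstacle is converting the Lyapunov data at the single point $x$ into uniform two-sided control on the diagonal cocycles of $L_{n+\ell,n}$ for arbitrary $n$. The three ingredients that make this work are: the $Df$-equivariance of the Lyapunov decomposition, which allows propagation of frames along the orbit; the block-wise Gram--Schmidt, which simultaneously triangularizes the cocycle and keeps the frames orthonormal; and the sandwich of the diagonal entries of a lower triangular matrix between its operator norm and the inverse of the operator norm of its inverse, which upgrades the one-sided Lyapunov inequalities into the symmetric bounds needed to estimate the ratios appearing in (\ref{Ord}).
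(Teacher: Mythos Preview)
Your proof follows the same strategy as the paper's: reduce to Theorem~\ref{basin} by building charts $\phi_n$ along the orbit, triangularize $L_n=Df_n(0)$ in a frame adapted to the transported Lyapunov splitting, and extract two-sided bounds on the diagonal products $|\lambda_{n,0}(j)|$ from the uniform Lyapunov convergence in order to verify (\ref{Ord}). The estimates in your third paragraph are exactly those of the paper's proof.

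There is, however, a technical inconsistency in your frame construction. You require $D\phi_n(0)$ to be an isometry \emph{and} the blocks of your frame to span the transported subspaces $E_{(s)}(f^n(x))=Df^n(x)E_{(s)}$; since these subspaces are in general not mutually orthogonal in $E^s(f^n(x))$, an orthonormal frame cannot be block-adapted in this strict sense, and your Gram--Schmidt ``within each block'' only yields a block-wise orthonormal frame. What makes the argument work is the standard fact---invoked explicitly in the paper with a reference to \cite{man83}---that on a compact hyperbolic set the angles between the images $Df^n(x)E_i$ remain uniformly bounded away from zero. This guarantees that block-wise orthonormal frames have uniformly bounded distortion, so that after a bounded linear non-autonomous conjugacy one may assume the $L_n$ are lower triangular and preserve an \emph{orthogonal} splitting of $\C^d$, while still satisfying (\ref{uniforme}) (possibly after replacing $f$ by an iterate, as the paper also does). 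With this point addressed, your argument is complete and coincides with the paper's.
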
 

\begin{proof}
Set $x_n=f^n(x)$. By the local stable manifold theorem and up to the replacement of $f$ with a sufficiently high iterate (an operation which does not change the stable manifold), we can find holomorphic embeddings
\[
\varphi_n \colon  B \hookrightarrow W^s(x_n)
\]
with domain the unit ball about $0$ in $\C^d$, mapping $0$ into $x_n$, and such that the identities
\[
f \circ \varphi_n = \varphi_{n+1} \circ f_n
\]
define holomorphic maps $f_n\colon B\rightarrow B$ such that
\begin{equation}
\label{lipp}
\nu |z| \leq |f_n(z)| \leq \lambda |z| \; , \qquad \forall z\in B, \; \forall n\in \N\; ,
\end{equation}
for some $0 < \nu \leq \lambda<1$. A biholomorphism from the stable manifold $W^s(x)$ onto 
the abstract basin of attraction of $0$ with respect to the sequence $(f_n)$ is given by mapping each $z\in W^s(x)$ into the equivalence class of the sequence $(f^n(z))_{n\geq m}$, where $m$ is so large that $f^m(z)$ belongs to the image of $\varphi_m$. Therefore, it is enough to show that this abstract basin of attraction is biholomorphic to $\C^d$.

Since the angles between the images of the subspaces $E_i$ by the isomorphism $Df^n(x_0)$ remain bounded away from zero (see \cite[Section IV.11]{man83}), by using a suitable linear non-autonomous conjugacy we may also assume that the automorphisms
\[
L_n = Df_n(0) 
\]
are lower triangular and preserve the orthogonal splitting of $\C^d$
\[
\C^d = \bigoplus_{i=1}^r X_i\; , \qquad X_i = \mathrm{Span}\,  \set{e_j}{k_{i-1} < j \leq k_i} \; ,
\]
for some $0=k_0 < k_1 < \dots < k_r = d$, and
\begin{equation}
\label{limi}
\lim_{n\rightarrow \infty} | L_{n,0}\, u|^{1/n} = \bar\lambda_i \quad \mbox{uniformly for $u$ in the unit sphere of }X_i\; ,
\end{equation}
where 
\[
\bar\lambda_1 >  \bar\lambda_2 > \dots > \bar\lambda_r \; .
\]
Denote by $\lambda_n(j)$, $1\leq j \leq d$, the diagonal entries of $L_n$. Fix an index $j$ such that $k_{i-1}<  j \leq k_i$.  Since $\lambda_{n,0}(j)$ is an eigenvalue of $L_{n,0}$ with eigenvector $u_n$ in the unit sphere of $X_i$, by (\ref{limi}) we have
\[
\lim_{n\rightarrow \infty} |\lambda_{n,0}(j)|^{1/n} = \lim_{n\rightarrow \infty} |L_{n,0} u_n|^{1/n} =\bar\lambda_i\;.
\]
If we set $\hat\lambda_j = \bar\lambda_i$ for $k_{i-1}<  j \leq k_i$, we have that
\[
\hat\lambda_1 \geq \hat\lambda_2 \geq \dots \geq \hat\lambda_d\; ,
\]
and for every $\omega>1$ there exists $c(\omega)>0$ such that
\[
c(\omega)^{-1} \omega^{-n}  \hat{\lambda}_j^n \leq  |\lambda_{n,0}(j)| \leq c(\omega) \omega^n \hat{\lambda}_j^n \; .
\]
The above two inequalities imply that, if $1\leq i \leq j \leq d$ and $n,\ell\in \N$, then
\[
\frac{|\lambda_{n+\ell,n} (j)|}{|\lambda_{n+\ell,n} (i)|} = \frac{  |\lambda_{n+\ell,0} (j)| \, |\lambda_{n,0} (i)|}{ |\lambda_{n,0} (j)| \, |\lambda_{n+\ell,0} (i)|} \leq \frac{ c(\omega)^2 \omega^{2n+\ell} \hat{\lambda}_j^{n+\ell} \hat{\lambda}_i^n}{ c(\omega)^{-2} \omega^{-2n-\ell}  \hat{\lambda}_j^n \hat{\lambda}_i^{n+\ell} } = c(\omega)^4 \omega^{4n+2\ell} \left( \frac{\hat\lambda_j}{\hat\lambda_i} \right)^{\ell} \leq c(\omega)^4 \omega^{4n+2\ell}
\]
Moreover, by (\ref{lipp}), $|\lambda_n(h)|\leq \lambda$ for every $n\in \N$, thus
\[
\max_{1\leq h \leq d} |\lambda_{n+\ell,n}(h)| \max_{1\leq i \leq j \leq d} \frac{|\lambda_{n+\ell,n}(j)|}{|\lambda_{n+\ell,n}(i)|} \leq c(\omega)^4 \omega^{4n} (\omega^2 \lambda)^{\ell}.
\]
This inequality shows that the assumption (\ref{ord}) of Theorem \ref{formal} holds (with a slightly larger $\lambda$), hence Theorem \ref{basin} implies that the abstract basin of attraction of $0$ with respect to $(f_n)$ is biholomorphic to $\C^d$.
\end{proof}

\renewcommand{\thesection}{\Alph{section}}
\setcounter{section}{0}
\setcounter{equation}{0}
\numberwithin{equation}{section}

\section{A non-autonomous conjugacy theorem}

The aim of this appendix is to prove the non-autonomous version of the well-known fact that two contracting germs of holomorphic maps which are conjugated as jets of a sufficiently high degree are actually conjugated as germs. Our proof follows the approach of Sternberg \cite{ste57}, but we replace the delicate estimates which are necessary to apply the Banach contraction principle by an easier computation of the spectral radius of the linearized operator, which allows us to apply the implicit function theorem. See \cite{bdm08} for a different approach.

The space $\C^d$ is endowed with the norm
\[
|z| := \max_{1\leq j \leq d} |z_j|\; , \qquad \forall z=(z_1,\dots,z_d) \in \C^d\; ,
\]
whose ball of radius $r$ about $0$, that we denote by $B_r$, is an open  polydisk. If $L$ is a linear endomorphism of $\C^d$, $\|L\|$ indicates its operator norm induced by $|\cdot|$. If $(f_n)$ is a sequence of composable maps we set, for $n\geq m \geq 0$,
\[
f_{n,m} := f_{n-1} \circ f_{n-2} \circ \cdots \circ f_m\;, \quad f_{n,n}=I\; ,
\]
so that
\[
f_{n,m} \circ f_{m,\ell} = f_{n,\ell}\;, \qquad \forall n\geq m \geq \ell \geq 0\; .
\]
Denote by $\mathscr{G}$ the space of germs at $0\in \C^d$ of holomorphic $\C^d$-valued maps which fix $0$. A sequence $(f_n)\subset \mathscr{G}$ is said to be \emph{bounded} if there exists $r>0$ such that $B_r$ is contained in the domain of each $f_n$ and $f_n(B_r)$ is uniformly bounded.   

\begin{thm}
\label{volata}
Let $(f_n)$ and $(g_n)$ be two bounded sequences in $\mathscr{G}$, whose linear parts coincide,
\[
L_n := Df_n(0) = Dg_n(0) \; ,
\]
and satisfy
\begin{equation}
\label{contrae}
\|L_{n,m}\|\leq c \lambda^{n-m}\;, \quad \|L_{n,m}^{-1}\| \leq c \mu^{n-m}\;, \qquad \forall n\geq m\in \N\;,
\end{equation}
for some $c>0$ and $0<\lambda<1<\mu$. Let $k$ be a positive integer such that
\[
\lambda^{k+1} \mu < 1\;.
\]
Assume that the $k$-jets of $(f_n)$ and $(g_n)$ are boundedly conjugated, meaning that there exists a bounded sequence of polynomial maps $H_n \colon  \C^d \rightarrow \C^d$, $n\in \N$, of degree at most $k$, such that
\begin{equation}
\label{kconju}
H_{n+1} \circ f_n  =  g_n \circ H_n \quad \mbox{as }k \mbox{-jets, } \qquad \forall n\in \N\;.
\end{equation}
Then $(f_n)$ and $(g_n)$ are boundedly conjugated as germs: There exists a bounded sequence $(h_n)\subset \mathscr{G}$ such that the $k$-jet of $h_n$ is $H_n$ and 
\begin{equation}
\label{gconju}
h_{n+1} \circ f_n = g_n \circ h_n\; , \qquad \forall n\in \N\; ,
\end{equation}
as germs. 
\end{thm}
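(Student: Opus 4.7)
My approach will mimic Sternberg's perturbative scheme: I look for the conjugacy in the form $h_n = H_n + \phi_n$, where $\phi_n$ is a germ vanishing to order $k+1$ at $0$, and I reduce the full conjugacy equation to a fixed-point equation for the sequence $\phi = (\phi_n)$. Applying Taylor's theorem, the identity $h_{n+1} \circ f_n = g_n \circ h_n$ becomes
\[
\phi_{n+1} \circ f_n - A_n(\phi_n)\,\phi_n = R_n\,, \qquad A_n(\phi_n)(z) := \int_0^1 Dg_n\bigl(H_n(z) + s\phi_n(z)\bigr)\,ds\,,
\]
where the source term $R_n := g_n \circ H_n - H_{n+1} \circ f_n$ vanishes to order $k+1$ at $0$ thanks to (\ref{kconju}). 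By the preliminary rescaling $f_n(z) \mapsto t^{-1}f_n(tz)$, $g_n(z)\mapsto t^{-1}g_n(tz)$, $H_n(z) \mapsto t^{-1}H_n(tz)$ for small $t>0$, which leaves $L_n$ and the constants $c$, $\lambda$, $\mu$ in (\ref{contrae}) invariant, I may assume all germs are defined and bounded on $\overline{B_1}$ and that all nonlinear contributions are as small as desired. I then work in the Banach space $X$ of bounded sequences $(\phi_n)$ of holomorphic maps $B_1 \rightarrow \C^d$ vanishing to order $k+1$ at the origin, endowed with the weighted norm
\[
\|\phi\|_X := \sup_{n \in \N}\;\sup_{z \in B_1 \setminus\{0\}} \frac{|\phi_n(z)|}{|z|^{k+1}}\,.
\]
Under the rescaling, $R = (R_n) \in X$ has norm $O(t^k)$.

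Isolating the linear part, the equation rewrites as $(I - S)\phi = -L^{-1}R - L^{-1}\widetilde{N}(\phi)$, where $L^{-1}$ is the fibrewise diagonal operator $(L^{-1}\psi)_n := L_n^{-1}\psi_n$, the linear operator $S\colon X \rightarrow X$ is defined by
\[
(S\phi)_n(z) := L_n^{-1}\,\phi_{n+1}\bigl(f_n(z)\bigr)\,,
\]
and $\widetilde{N}(\phi)_n(z) := \bigl(A_n(\phi_n)(z) - L_n\bigr)\phi_n(z)$ is the nonlinear remainder. A direct induction gives
\[
(S^N \phi)_n(z) = L_{n+N,n}^{-1}\,\phi_{n+N}\bigl(f_{n+N,n}(z)\bigr)\,,
\]
so by (\ref{contrae}) and the contraction bound $|f_{n+N,n}(z)| \leq {\lambda'}^{N}|z|$ on $B_1$ (valid for any $\lambda' > \lambda$ once $t$ is small enough), I obtain
\[
\|S^N\|_{L(X)} \leq c\,\bigl(\mu\,{\lambda'}^{k+1}\bigr)^{N}\,.
\]
Since $\lambda^{k+1}\mu < 1$ by hypothesis, I may choose $\lambda'$ so close to $\lambda$ that $\mu\,{\lambda'}^{k+1} < 1$; the spectral radius of $S$ on $X$ is therefore strictly less than~$1$, and $I - S$ is invertible on $X$ with $(I-S)^{-1} = \sum_{j \geq 0} S^j$.

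The conjugacy equation now reads $\phi = \Psi(\phi)$ with $\Psi(\phi) := -(I-S)^{-1}L^{-1}\bigl(R + \widetilde{N}(\phi)\bigr)$. A short computation using that $H_n$, $Dg_n$ and their Lipschitz constants are uniformly bounded, and that $Dg_n(0) = L_n$, shows $A_n(0)(z) - L_n = O(t|z|)$ uniformly in $n$; since $\phi_n$ carries an extra factor $|z|^{k+1}$, the Fréchet derivative $D\widetilde{N}(\phi)$ has operator norm $O(t)$ on any bounded ball of $X$. Consequently, for $t$ small enough, $\Psi$ is a contraction on a closed ball $\{\|\phi\|_X \leq \rho\}$ of radius $\rho = O(t^k)$, yielding a unique fixed point $\phi \in X$. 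Setting $h_n := H_n + \phi_n$ produces a bounded sequence of germs on $B_1$ whose $k$-jets are the prescribed $H_n$ and which satisfies (\ref{gconju}) as germs. The main obstacle is the spectral-radius estimate for $S$: one must quantify precisely how composing with the contraction $f_n$ dampens a germ of order $\geq k+1$ (by a factor $\lambda^{k+1}$) against the expansion $\mu$ from $L_n^{-1}$, and it is exactly the hypothesis $\lambda^{k+1}\mu < 1$ that makes the iterated bound summable.
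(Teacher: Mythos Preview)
Your plan is sound and closely parallels the paper's, but the execution differs and there is one genuine gap.

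\textbf{Comparison with the paper.} The paper also rescales by a small parameter and works in the Banach space of bounded holomorphic maps on $B_1$ with vanishing $k$-jet; your weighted norm $\sup_z |\phi_n(z)|/|z|^{k+1}$ actually coincides with the sup norm by the Schwarz lemma, so the ambient spaces agree. The key step in both arguments is the same spectral-radius computation: composing with a contraction on the right gains $\lambda^{k+1}$ on $k$-flat functions, while left multiplication by $L_n^{-1}$ costs at most $\mu$, and the hypothesis $\lambda^{k+1}\mu<1$ makes the spectrum of the linearised operator lie inside the unit disk. Where you differ is in how you close the argument. The paper sets up a two-variable map $\Phi(r,u)$, linearises at $(r,u)=(0,0)$ (so the relevant operator is $(Tu)_n = L_n^{-1}\circ u_{n+1}\circ L_n$, built from the \emph{linear} parts only), verifies continuity of $D_2\Phi$, and invokes the parametric implicit function theorem. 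You instead keep the full rescaled $f_n$ in your operator $S$ and run the Banach contraction principle directly, which forces you to estimate the nonlinear remainder $\widetilde N$ by hand. The paper explicitly notes that it chose IFT precisely to avoid these ``delicate estimates''; your route is more hands-on but perfectly legitimate, and your Lipschitz bound $\|D\widetilde N(\phi)\|=O(t)$ is correct.

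\textbf{The gap.} Your claim that, after rescaling by a small $t$, one has $|f_{n+N,n}(z)|\le {\lambda'}^N|z|$ on $B_1$ for any $\lambda'>\lambda$ is not justified by the hypotheses as stated. The assumption (\ref{contrae}) only bounds the \emph{products} $\|L_{n,m}\|\le c\lambda^{n-m}$; for a single step this gives merely $\|L_n\|\le c\lambda$, which may well exceed $1$. In that case no choice of $t$ makes $f_n^t(B_1)\subset B_1$, so your operator $S$ is not even well-defined, let alone power-bounded. The paper addresses exactly this by a preliminary reduction (passing to $N$-step blocks $\tilde L_n:=L_{(n+1)N,nN}$, $\tilde f_n:=f_{(n+1)N,nN}$, etc., for $N$ so large that $c\lambda^N\le\hat\lambda^N<1$), after which the stronger pointwise bounds $\|\tilde L_n\|\le\tilde\lambda<1$, $\|\tilde L_n^{-1}\|\le\tilde\mu$ with $\tilde\lambda^{k+1}\tilde\mu<1$ hold and the single-step contraction is immediate. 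You should insert this reduction at the start; once it is in place, your contraction argument goes through.
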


As already noticed (see Remark \ref{indep}), the assumption (\ref{contrae}) does not depend on the choice of the norm of $\C^d$ inducing the operator norm, up to the choice of a different constant $c$. Moreover, we may replace this assumption by the stronger requirement
\begin{equation}
\label{contrae+}
\|L_n \| \leq \lambda<1 \; , \quad \|L_n^{-1}\|\leq \mu\;, \qquad \forall n\in \N\; .
\end{equation}
Indeed, let $\hat{\lambda}$ and $\hat{\mu}$ be such that $\lambda<\hat{\lambda}<1$, $\hat{\mu}>\mu$ and $\hat{\lambda}^{k+1}\hat{\mu}<1$. Then (\ref{contrae}) implies that we can find a natural number $N$ such that
\[
\|L_{n+N,n}\| \leq  c\lambda^N \leq \hat{\lambda}^N\; , \quad \|L_{n+N,n}^{-1} \| \leq c\mu^N \leq \hat{\mu}^N \; , \qquad \forall n\in \N\; .
\]
If we set $\tilde{L}_n:=L_{(n+1)N,nN}$,
$\tilde{\lambda}:= \hat\lambda^N<1$, $\tilde{\mu}:= \hat{\mu}^N$, we see that the sequence $(\tilde{L}_n)$ fulfills the condition (\ref{contrae+}) with constants $\tilde{\lambda}$ and $\tilde{\mu}$ which satisfy 
\[
\tilde{\lambda}^{k+1} \tilde \mu = (\hat\lambda^{k+1} \hat{\mu})^N < 1\; .
\]
If we set
\[
\tilde{f}_n := f_{(n+1)N,nN}\;, \quad \tilde{g}_n := g_{(n+1)N,nN}\;,
\]
and we define $\tilde{H}_n$ to be the $k$-jet of the composition $H_{(n+1)N,nN}$, we get that $(\tilde{f}_n)$, $(\tilde{g}_n)$ and $(\tilde{H}_n)$ 
satisfy the assumptions of the theorem, with (\ref{contrae}) replaced by the stronger (\ref{contrae+}). Finally, if $(\tilde{h}_n)$ is a bounded conjugacy between $(\tilde{f}_n)$ and $(\tilde{g}_n)$ with sequence of $k$-jets $(\tilde{H}_n)$, then the unique sequence $(h_n)\subset \mathscr{G}$ such that $h_{nN} = \tilde{h}_n$ 
for every $n\in \N$ and such that (\ref{gconju}) holds, is bounded and has $(H_n)$ as its sequence of $k$-jets. This shows that we may assume (\ref{contrae+}) instead of (\ref{contrae}).

\medskip

Denote by $\|\cdot\|_{\infty,r}$ the supremum norm on the polydisk $B_r$.  
Consider the space 
\[
X_k := \set{u: B_1 \rightarrow \C^d}{u \mbox{ holomorphic and bounded with vanishing }k\mbox{-jet at }0}\; .
\]
By the Cauchy formula, $\|\cdot\|_{\infty,1}$ is a Banach norm on $X_k$. Moreover, using the Taylor formula with integral remainder and again the Cauchy formula, we find that for every $u\in X_k$ and every $r<1$ there holds
\begin{equation}
\label{kpiatta}
\|u\|_{\infty,r} \leq \frac{r^{k+1}}{(k+1)!} \|D^{k+1} u\|_{\infty,r} \leq C \left( \frac{r}{1-r} \right)^{k+1} \|u\|_{\infty,1}\; ,
\end{equation}
where $C=C(d,k)$ depends only on the dimension $d$ and on the degree $k$. 

Let $\ell^{\infty}(X_k)$ be the Banach space of sequences $u=(u_n)\subset X_k$ with finite supremum norm
\[
\|u\|_{\ell^{\infty}(X_k)} := \sup_{n\in \N} \|u_n\|_{\infty,1} < +\infty,
\]  
and denote by $U$ its unit open ball.
If $f\colon B_s \rightarrow \C^d$ is a holomorphic map such that $f(0)=0$, we consider the rescaled functions
\[
f^r \colon  B_1 \rightarrow \C^d\;, \quad f^r (z) := \left\{ \begin{array}{ll} \displaystyle{ \frac{1}{r} f(rz)} & \mbox{if } 0<r\leq s\;, \\ \displaystyle{ Df(0) z} & \mbox{if } r=0\;. \end{array} \right.
\]
In order to prove Theorem \ref{volata}, it is enough to find a sequence $u\in U$ such that for $r>0$ small enough there holds
\begin{equation}
\label{tesi}
(H_{n+1}^r + u_{n+1} \bigr) \circ f_n^r - g_n^r \circ ( H_n^r + u_n ) = 0\; , \qquad \forall n\in \N\; .
\end{equation}
In fact, in such a case the sequence of holomorphic functions
\[
h_n\colon  B_r \rightarrow \C^d, \quad h_n(z):=  H_n(z) + r\, u_n \bigl( z/r \bigr)\;,
\]
is bounded, has $H_n$ as its $k$-jet  and satisfies (\ref{gconju}).

By (\ref{kconju}), the $k$-th jet of the left-hand side of (\ref{tesi}) vanishes. Moreover, the fact that the sequences $(f_n)$ and $(H_n)$ are bounded and $\|Df_n(0)\| \leq \lambda <1$ impy that there exists $r_0>0$ such that for every $r\in ]0,r_0[$ and every $u\in U$, there holds
\begin{eqnarray*}
& f_n^r(B_1) = \frac{1}{r} f_n(B_r) \subset B_{\|Df_n\|_{\infty,r}} \subset B_1\; ,& \\ &
\bigl( H_n^r + u_n\bigr) (B_1) \subset \frac{1}{r} H_n(B_r) + u_n(B_1) \subset B_{\|DH_n\|_{\infty,r} + 1} \subset B_{1/r}\; . &
\end{eqnarray*}
The above considerations imply that the map
\[
\Phi\colon  [0,r_0[ \times U\rightarrow \ell^{\infty}(X_k)\; , \quad \Phi(r,u)_n :=  ( H_{n+1}^r + u_{n+1} ) \circ f_n^r - g_n^r \circ  (H_n^r + u_n )\; ,
\]
is well-defined. 
Notice that for $r=0$ the map $\Phi$ is linear in $u$,
\begin{equation}
\label{zero}
\Phi(0,u) = \bigl(u_{n+1} \circ L_n - L_n \circ u_n\bigr)_{n\in \N}\; .
\end{equation}

Our aim is to show that if $r>0$ is small enough, then there exists $u\in U$ such that $\Phi(r,u)=0$. Since $\Phi(0,0)=0$, this fact is an immediate consequence of the parametric inverse mapping theorem, because of the following two Lemmas.

\begin{lem}
The map $\Phi$ is continuous, is differentiable with respect to the second variable, and 
\[
D_2 \Phi \colon  [0,r_0[ \times U \rightarrow L\bigl( \ell^{\infty}(X_k) \bigr) 
\]
is continuous.
\end{lem}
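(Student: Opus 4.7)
The plan is to verify the three assertions (joint continuity of $\Phi$, Fréchet differentiability of $\Phi(r,\cdot)$, and joint continuity of $D_2\Phi$) by decomposing $\Phi$ as a nested composition of operations on uniformly bounded families of holomorphic maps, then invoking Cauchy estimates to promote pointwise (in $n$) statements to uniform ones. First I would upgrade the boundedness hypothesis: after a harmless coordinate rescaling, I may assume the common domain of the $f_n$, $g_n$, $H_n$ is a polydisk $B_\rho$ with $\rho>1$ and that $\|f_n\|_{\infty,\rho}, \|g_n\|_{\infty,\rho}, \|H_n\|_{\infty,\rho}\leq M$ for some $M$ independent of $n$. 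From this, together with the vanishing of $f_n(0),g_n(0),H_n(0)$ and the bounds on their linear parts, the rescaled maps $f_n^r,g_n^r,H_n^r$ form bounded families of holomorphic maps on $B_{\rho/r}$ that depend \emph{Lipschitz-continuously} on $r\in[0,r_0[$ in the sup norm on $\overline B_1$, with Lipschitz constants independent of $n$ (this is the same argument as in Section~\ref{triautosec}: the Cauchy estimate turns $f_n^r-L_n z=\tfrac{1}{r}\bigl(f_n(rz)-L_n(rz)\bigr)$ into $O(r|z|^2)$ uniformly). Up to shrinking $r_0$, I would also arrange $f_n^r(B_1)\subset B_{1/2}$ and $(H_n^r+u_n)(B_1)\subset B_{r_0^{-1}}\subset B_{\rho/r}$ for every $u\in U$ and $r\in[0,r_0[$, so that the compositions defining $\Phi$ are well-posed.

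Next, for the continuity of $\Phi$ I would split $\Phi(r,u)_n=A_n(r,u)-B_n(r,u)$, with $A_n(r,u):=(H_{n+1}^r+u_{n+1})\circ f_n^r$ and $B_n(r,u):=g_n^r\circ(H_n^r+u_n)$, and use the general principle that composition $(F,G)\mapsto G\circ F$ is continuous on bounded sets of holomorphic maps whose ranges stay in a fixed relatively compact subset of the domain of the outer map. Concretely, telescoping and applying Cauchy to control the relevant derivatives gives
\[
\|A_n(r_1,u)-A_n(r_2,v)\|_{\infty,1}\leq C\bigl(|r_1-r_2|+\|u-v\|_{\ell^\infty(X_k)}\bigr),
\]
with $C$ independent of $n$; the same holds for $B_n$ because the image of $H_n^r+u_n$ stays inside a fixed polydisk on which $g_n^r$ has uniformly bounded derivatives. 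Taking $\ell^\infty$ in $n$ yields continuity of $\Phi$.

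For the Fréchet differentiability in $u$, observe that $u\mapsto A_n(r,u)$ is \emph{affine} in $u$, and the nonlinearity is confined to $B_n(r,u)$. A pointwise second-order Taylor expansion of $g_n^r$ at $H_n^r(z)+u_n(z)$, controlled uniformly in $n$ and $z\in B_1$ by the Cauchy bound on $D^2g_n^r$ on a fixed compact set containing the image of $H_n^r+u_n$, shows that
\[
D_2\Phi(r,u)[v]_n=v_{n+1}\circ f_n^r-Dg_n^r(H_n^r+u_n)\,v_n
\]
is the Fréchet derivative at $u$, with the remainder $o(\|v\|_{\ell^\infty(X_k)})$ uniformly in $n$. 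The continuity of $D_2\Phi$ then follows by the same composition-of-holomorphic-maps principle applied to the continuous dependence of $(r,u)\mapsto Dg_n^r(H_n^r+u_n)$ on $(r,u)$ in the sup norm on $B_1$, again uniformly in $n$.

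The main technical obstacle, in my view, is not any single estimate but the bookkeeping required to keep every bound uniform in $n$ while juggling several rescalings. A secondary point one must not forget is that the codomain $\ell^\infty(X_k)$ demands vanishing $k$-jet at $0$ in each component: this is preserved throughout because the hypothesis (\ref{kconju}) already guarantees it for $\Phi(r,0)$, the composition $u_{n+1}\circ f_n^r$ inherits it from the $k$-flatness of $u_{n+1}$ and from $f_n^r(0)=0$, the increment $g_n^r\circ(H_n^r+u_n)-g_n^r\circ H_n^r$ is a $k$-flat linear combination of $u_n$ by the integral form of the remainder, and an analogous check applies to $D_2\Phi(r,u)[v]$.
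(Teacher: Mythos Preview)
Your proposal is correct and follows essentially the same route as the paper: both arguments reduce everything to Cauchy estimates on uniformly bounded families of holomorphic maps, then control the compositions appearing in $\Phi$ and $D_2\Phi$ uniformly in $n$. The paper organizes the work slightly differently, isolating $r=0$ as the only nontrivial point (computing $\Phi(r,u)-\Phi(0,u)$ and $D_2\Phi(r,u)-D_2\Phi(0,u)$ explicitly and declaring continuity on $]0,r_0[\times U$ as ``clear''), whereas you treat all $r$ at once via a Lipschitz-in-$r$ estimate; but the underlying analytic content is the same. Your explicit remark on why the codomain $\ell^\infty(X_k)$ is preserved (the $k$-flatness check) is a point the paper leaves implicit, and is worth keeping.
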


\begin{proof}
We shall repeatedly make use of the following consequence of Taylor's and Cauchy's formulas: If $F\subset \mathscr{G}$ is a bounded set of germs whose $h$-jets vanish, then
\begin{equation}
\label{vanh} 
\|D^j f^r\|_{\infty,s} = O(r^{h-j}) \quad \mbox{for } r\rightarrow 0 \mbox{ , uniformly in } f\in F\;,
\end{equation}
for every $j\in \N$ and $s>0$. By (\ref{kconju}),
\[
\ell_n := H_{n+1} \circ f_n - g_n \circ H_n
\]
is a bounded sequence of germs with vanishing $k$-jet. Write
\[
f_n = L_n + \hat{f}_n \; , \quad  g_n = L_n + \hat{g}_n \; , 
\]
where $(\hat{f}_n)$ and $(\hat{g}_n)$ are bounded sequences in $\mathscr{G}$ with vanishing 1-jet. If $u\in U$ and $0<r<r_0$, a simple computation yields
\begin{equation}
\label{effe}
\bigl[ \Phi(r,u) - \Phi(0,u) \bigr]_n = \ell_n^r + \hat{g}_n^r \circ H_n^r - \hat{g}_n^r \circ ( H_n^r + u_n ) + u_{n+1}\circ f_n^r - u_{n+1}\circ L_n.
\end{equation}
Since the $k$-jet of $\ell_n$ is zero and $k\geq 1$, (\ref{vanh}) implies that
\begin{equation}
\label{f0}
\| \ell_n^r \|_{\infty,1} = O(1) .
\end{equation}
Here and in the following lines, limits are for $r\rightarrow 0$ and are uniform in $n\in \N$.
Moreover $H_n(0)=0$, so by (\ref{vanh})
\[
\| H_n^r \|_{\infty,1} = O(1)\; .
\]
Then, since the 1-jet of $\hat{g}_n$ vanishes, a further use of (\ref{vanh}) implies that
\begin{equation}
\label{f1}
\| \hat{g}_n^r \circ H_n^r  \|_{\infty,1} = O(r) \;.
\end{equation}
Similarly,
\begin{equation}
\label{f2}
\bigl\| \hat{g}_n^r \circ (H_n^r + u_n ) \bigl\|_{\infty,1} = O(r)\;.
\end{equation}
Again by (\ref{vanh}),
\[
\| f_n^r - L_n \|_{\infty,1} = O(r)\; ,
\]
so, by the mean value theorem and (\ref{contrae+}),
\[
\| u_{n+1} \circ f_n^r - u_{n+1} \circ L_n \|_{\infty,1} = \|D u_{n+1} \|_{\infty,\lambda+O(r)} O(r) \; .
\]
Since $\lambda<1$, the Cauchy formula implies that $\|D u_{n+1} \|_{\infty,\lambda+O(r)}$ is bounded by $\|u_{n+1}\|_{\infty,1}$ for $r$ small enough, so we have
\begin{equation}
\label{f3}
\| u_{n+1} \circ f_n^r - u_{n+1}\circ L_n \|_{\infty,1} = \|u_{n+1}\|_{\infty,1} O(r) \; .
\end{equation}
Formula (\ref{effe}) and the asymptotics (\ref{f0}), (\ref{f1}), (\ref{f2}), (\ref{f3}) imply that for every $u\in U$ 
\[
\Phi(r,u) - \Phi(0,u) = O(r)\; .
\]
Together with the fact that the map $\Phi$ is clearly continuous on $]0,r_0[ \times U$, this proves that $\Phi$ is continuous on $[0,r_0[ \times U$. 

The map $u\mapsto \Phi(r,u)$ is linear for $r=0$; for $r>0$ its differential is given by
\[
\bigl[ D_2 \Phi(r,u) [v] \bigr]_n = v_{n+1} \circ f_n^r - D g_n^r (H_n^r+u_n) [v_n]\; .
\]
Then if $0<r<r_0$,
\begin{equation}
\label{effe1}
\bigl[ D_2 \Phi(r,u)[v] - D_2 \Phi(0,u) [v] \bigr]_n = v_{n+1} \circ f_n^r - v_{n+1} \circ L_n - D g_n^r (H_n^r + u_n) [v_n] - L_n v_n\; .
\end{equation}
By (\ref{f3}),
\begin{equation}
\label{f4}
\| v_{n+1} \circ f_n^r - v_{n+1} \circ L_n \|_{\infty,1} = \|v_{n+1}\|_{\infty,1} O(r)\; .
\end{equation}
Moreover, the identity
\[
D g_n^r (H_n^r + u_n) = D g_n ( r H_n^r + r u_n)
\]
implies that
\[
\bigl\| D g_n^r (H_n^r+u_n) - L_n \bigr\|_{\infty,1} = o(1)\; ,
\]
hence
\begin{equation}
\label{f5}
\bigl\| D g_n^r ( H_n^r + u_n)[v_n] - L_n v_n \|_{\infty,1} = \|v_n\|_{\infty,1} o(1)
\; .
\end{equation} 
By (\ref{effe1}), (\ref{f4}) and (\ref{f5}), 
\[
\bigl\| D_2 \Phi(r,u) - D_2 \Phi (0,u) \bigr\| = o(1)\; ,
\]
and together with the fact that the map $(r,u) \mapsto D_2 \Phi(r,u)$ is easily seen to be continuous on $]0,r_0[ \times U$, we conclude that this map is continuous on $[0,r_0[ \times U$.
\end{proof}

\begin{lem}
The linear operator $D_2 \Phi(0,0)\colon  \ell^{\infty}(X_k) \rightarrow \ell^{\infty}(X_k)$ is an isomorphism.
\end{lem}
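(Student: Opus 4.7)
Since $\Phi(0,u)$ is already linear in $u$ by (\ref{zero}), the operator $T := D_2\Phi(0,0)$ coincides with $\Phi(0,\cdot)$, and the equation $Tu = b$ reads
\[
u_{n+1}\circ L_n - L_n\circ u_n = b_n, \qquad n\in\N.
\]
My plan is to produce an explicit bounded two-sided inverse for $T$ by solving this recurrence \emph{backward} in $n$. Rewriting it as $u_n = L_n^{-1}(u_{n+1}\circ L_n) - L_n^{-1}b_n$ and iterating $\ell$ times yields
\[
u_n = L_{n+\ell,n}^{-1}(u_{n+\ell}\circ L_{n+\ell,n}) - \sum_{j=0}^{\ell-1} L_{n+j+1,n}^{-1}(b_{n+j}\circ L_{n+j,n}),
\]
and sending $\ell\to\infty$ suggests the candidate
\[
u_n := -\sum_{\ell=0}^{\infty} L_{n+\ell+1,n}^{-1}(b_{n+\ell}\circ L_{n+\ell,n}).
\]
Solving forward is not an option: the equation determines $u_{n+1}$ only on the strictly smaller polydisk $L_n(B_1)$, whereas the backward iteration remains well-posed on all of $B_1$.

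To control the candidate I will work under the stronger hypothesis (\ref{contrae+}), available by the reduction argued after the statement, so that $\|L_{n+\ell,n}\|\leq \lambda^\ell$ and $\|L_{n+\ell+1,n}^{-1}\|\leq \mu^{\ell+1}$. Then $L_{n+\ell,n}(B_1)\subset B_{\lambda^\ell}$, and because $b_{n+\ell}$ has vanishing $k$-jet at $0$, the flatness estimate (\ref{kpiatta}) gives a constant $C' = C'(d,k,\lambda)$ with
\[
\|b_{n+\ell}\circ L_{n+\ell,n}\|_{\infty,1} \leq \|b_{n+\ell}\|_{\infty,\lambda^\ell} \leq C'\lambda^{\ell(k+1)}\|b\|_{\ell^\infty(X_k)}.
\]
Hence the $\ell$-th summand has $\|\cdot\|_{\infty,1}$-norm at most $C'\mu(\lambda^{k+1}\mu)^\ell\|b\|_{\ell^\infty(X_k)}$, and the hypothesis $\lambda^{k+1}\mu<1$ forces absolute geometric convergence uniformly in $n$. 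Each summand trivially has vanishing $k$-jet, and uniform convergence on compact subsets of $B_1$ combined with the Cauchy formula preserves this property, so $u\in\ell^\infty(X_k)$ with $\|u\|_{\ell^\infty(X_k)} \leq \tfrac{C'\mu}{1-\lambda^{k+1}\mu}\|b\|_{\ell^\infty(X_k)}$; a direct index-shift in the telescoped identity then verifies that $Tu=b$, establishing surjectivity and boundedness of a right inverse.

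Injectivity will follow from the same flatness estimate: if $Tu = 0$, iteration gives $u_n = L_{n+\ell,n}^{-1}(u_{n+\ell}\circ L_{n+\ell,n})$ for every $\ell \geq 1$, and the right-hand side is bounded by $C'(\lambda^{k+1}\mu)^\ell\|u\|_{\ell^\infty(X_k)}\to 0$ as $\ell\to\infty$, forcing $u\equiv 0$. The only genuine obstacle is the delicate balance between the flatness gain $\lambda^{\ell(k+1)}$ afforded by the vanishing $k$-jet and the expansion loss $\mu^{\ell+1}$ from the inverse linear parts; the hypothesis $\lambda^{k+1}\mu<1$ is precisely what makes this balance favorable and fixes the minimal admissible degree $k$. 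Combining both estimates shows that $T$ is bijective with bounded inverse, i.e., an isomorphism.
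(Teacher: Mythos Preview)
Your proof is correct and rests on the same key ingredient as the paper's---the flatness estimate (\ref{kpiatta}) combined with the hypothesis $\lambda^{k+1}\mu<1$---but the packaging differs. The paper first factors out left multiplication by $L_n$, reducing the question to invertibility of $S-I$ where $(Su)_n := L_n^{-1}\,u_{n+1}\circ L_n$, and then uses (\ref{kpiatta}) to bound $\|S^j\|$ and conclude that the spectral radius $\rho(S)\leq \lambda^{k+1}\mu<1$, so $1\notin\sigma(S)$. You instead write down the Neumann series for the inverse explicitly and estimate each term directly; your series $u_n=-\sum_{\ell\geq 0}L_{n+\ell+1,n}^{-1}(b_{n+\ell}\circ L_{n+\ell,n})$ is precisely $-\sum_{\ell\geq 0}S^\ell$ applied to $(L_n^{-1}b_n)$. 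Your version is slightly more hands-on and avoids invoking the spectral radius formula, at the cost of having to check $Tu=b$ and injectivity separately; the paper's version is a one-line conclusion once $\rho(S)<1$ is known. Either way the substance is identical. One small remark: your bound $\|b_{n+\ell}\|_{\infty,\lambda^\ell}\leq C'\lambda^{\ell(k+1)}\|b\|_{\ell^\infty(X_k)}$ via (\ref{kpiatta}) requires $\lambda^\ell<1$, i.e.\ $\ell\geq 1$; for $\ell=0$ the inequality is trivial with constant $1$, so the conclusion is unaffected.
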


\begin{proof}
By (\ref{zero}), we have
\[
D_2\Phi(0,0)[u] = \bigl( u_{n+1} \circ L_n - L_n \circ u_n \bigr)_{n\in \N} \;.
\] 
The multiplication operator by the sequence $(L_n)$, that is
\[
(u_n) \mapsto (L_n u_n),
\]
is an automorphism of $\ell^{\infty}(X_k)$, so it is enough to show that the operator 
\begin{equation}
\label{ope}
(u_n) \mapsto (L_n^{-1} \circ u_{n+1} \circ L_n - u_n)
\end{equation}
is invertible on $\ell^{\infty}(X_k)$. Consider the bounded linear operator
\[
T\colon  \ell^{\infty}(X_k) \rightarrow \ell^{\infty}(X_k), \quad (u_n) \mapsto (L_n^{-1} \circ u_{n+1} \circ L_n)\;.
\]
If $j$ is a positive integer, the $j$-th power of $T$ is
\[
(T^j u)_n = L_{n,n+j} \circ u_{n+j} \circ L_{n+j,n}\;.
\]
By (\ref{contrae+}) and (\ref{kpiatta}), we have the estimate
\begin{eqnarray*}
\|(T^j u)_n\|_{\infty,1} = \|L_{n+j,n}^{-1} \circ u_{n+j} \circ L_{n+j,n}\|_{\infty,1} \leq \mu^j \|u_{n+j} \circ L_{n+j,n}\|_{\infty,1} \\
\leq \mu^j \|u_{n+j}\|_{\infty,\lambda^j} \leq C \mu^j \left( \frac{\lambda^j}{1-\lambda^j} \right)^{k+1} \|u_{n+j}\|_{\infty,1} \leq C (1-\lambda^j)^{-k-1} (\lambda^{k+1} \mu)^j \|u\|_{\ell^{\infty}(X_k)}\;.
\end{eqnarray*}
By taking the supremum over all $n\in \N$ and all $u\in \ell^{\infty}(X_k)$, we deduce that the operator norm of $T^j$ has the upper bound
\[
\|T^j\|^{1/j} \leq C^{1/j} (1-\lambda^j)^{-(k+1)/j} \lambda^{k+1} \mu\;.
\]
Since $\lambda<1$, the right-hand side of the above inequality converges to $\lambda^{k+1} \mu$ for $j\rightarrow +\infty$, so the spectral radius of $T$ has the upper bound
\[
\rho(T) \leq \lambda^{k+1} \mu\;.
\]
Since $\lambda^{k+1} \mu<1$, we deduce in particular that 1 does not belong to the spectrum of $T$, so the operator (\ref{ope}) is an isomorphism, as we wished to prove.
\end{proof}  

\begin{rem}
\label{domain}
We recall that any polynomial map $p\colon \C^d \rightarrow \C^d$ of degree at most $k$ with $Dp(0)$ invertible  is the $k$-jet at $0$ of a holomorphic automorphism of $\C^d$ (as proven by F. Forstneric \cite{for99} and B. Weickert \cite{wei97}). This fact and  
Theorem \ref{volata} immediately imply that the abstract basin of attraction (see Section \ref{saba}) of $0$ with respect to a sequence of holomorphic maps $f_n \colon  B_1 \rightarrow B_1$  such that 
\[
\nu |z| \leq |f_n(z)| \leq \lambda |z|\;, \qquad \forall z\in B_1\;, \; \forall n\in \N\;,
\]
where $0<\nu\leq\lambda<1$ is always biholomorphic to a domain of $\C^d$, a result of J.E. Fornaess and B. Stens{\o}nes, see \cite{fs04}. Indeed, the above assumption implies that
\[
f_n(0)=0\;, \quad \|Df_n(0)\|\leq \lambda \; , \quad \|Df_n(0)^{-1}\| \leq \nu^{-1}\; ,
\]
and if $g_n$ is an automorphism of $\C^d$ whose $k$-jet at $0$ coincides with that of $f_n$, with $k$ so large that $\lambda^{k+1} \nu^{-1}<1$, Theorem \ref{volata} and Lemma \ref{conimpW} imply that the abstract basins of attraction of $0$ with respect to $(f_n)$ and to $(g_n)$ are biholomorphic. But, since each $g_n$ is an automorphism of $\C^d$, the abstract basin of attraction of $0$ with respect to $(g_n)$ is naturally biholomorphic to the following domain in $\C^d$
\[
\set{z\in \C^d}{g_{n,0}(z) \rightarrow 0\mbox{ for } n\rightarrow \infty}\; ,
\]
as shown in Section \ref{saba}. In particular, the stable manifold of any point in a compact hyperbolic invariant set of a holomorphic automorphism of a complex manifold is always biholomorphic to a domain in $\C^d$.
\end{rem}


\providecommand{\bysame}{\leavevmode\hbox to3em{\hrulefill}\thinspace}
\providecommand{\MR}{\relax\ifhmode\unskip\space\fi MR }
\providecommand{\MRhref}[2]{%
  \href{http://www.ams.org/mathscinet-getitem?mr=#1}{#2}
}
\providecommand{\href}[2]{#2}

{\small\obeylines
\noindent Marco Abate, Alberto Abbondandolo, Pietro Majer
\noindent Dipartimento di Matematica
\noindent Universit\`a di Pisa
\noindent Largo Pontecorvo 5
\noindent 56127 Pisa
\noindent Italy
\noindent \emph{E-mails:} abate@dm.unipi.it
\noindent abbondandolo@dm.unipi.it
\noindent majer@dm.unipi.it}

\end{document}